\newtheorem{thrm}{Theorem}[section]
\newtheorem{lem}[thrm]{Lemma}
\newtheorem{main}{Theorem}
\newcommand{\centerof}[1]{\mathbf{Z}(#1)}
\newcommand{\centralizer}[2]{\mathbf{C}_{#1}(#2)}
\newcommand{\normalizer}[2]{\mathbf{N}_{#1}(#2)}
\newcommand{\id}{\{1\}}    
\newcommand{\syl}[2]{\textnormal{Syl}_{#1}(#2)}
\newcommand{\nsub}{\trianglelefteq}
\newcommand{\characteristic}{\mathrel{\textnormal{char}}}
\begin{document}

\title[$2$-Frobenius group]{The Cyclic Graph of a $2$-Frobenius Group}

\author[D.G.\ Costanzo]{David G.\ Costanzo}
\address{School of Mathematical and Statistical Sciences, Clemson University, Clemson, SC 29634, USA}
\email{dgcosta@clemson.edu}

\author[M.L.\ Lewis]{Mark L.\ Lewis}
\address{Department of Mathematical Sciences, Kent State University, Kent, OH 44242, USA}
\email{lewis@math.kent.edu}

\keywords{cyclic graph, commuting graph, $2$-Frobenius group}
\subjclass[2010]{Primary: 20C15, Secondary: 05C25}

\begin{abstract}
The cyclic graph of a group $G$ is the graph whose vertices are the nonidentity elements of $G$ and whose edges connect distinct elements $x$ and $y$ if and only if the subgroup $\langle x,y\rangle$ is cyclic.
We obtain information about the cyclic graph of $2$-Frobenius groups.  The cyclic graph of a $2$-Frobenius group is disconnected.  In this paper, we determine the number of connected components of the cyclic graph of any $2$-Frobenius group.
\end{abstract}

\maketitle
	
\section{Introduction}

All groups in this paper are finite.  Given a  nonabelian group $G$, the \textit{commuting graph} of $G$ is the graph whose vertex set is $G\setminus\centerof{G}$, the set of non-central elements of $G$, and there is an edge between distinct vertices $x$ and $y$ when $xy=yx$. We denote this graph by $\Gamma(G)$. 
A related graph, denoted by $\Delta(G)$, takes $G^{\#}=G\setminus\id$ as its vertex set, and there is an edge between distinct vertices $x$ and $y$ when $\langle x,y\rangle$ is cyclic. We call this the \textit{cyclic graph} of $G$.
The cyclic graph of a group has also appeared in the literature under name \textit{punctured enhanced power graph}.  

The commuting graph has been studied in many papers; more than we want to cite here.  For our purposes, \cite{parker} is the most relevant.  We believe the cyclic graph was first studied in \cite{imperatore}.  As we have said, it is closely related to the enhanced power graph which was introduced in \cite{cameron} and studied in \cite{bera} among others.  The work in this paper arose in conjunction with the work done on the cyclic graph in \cite{directprod} and \cite{zgroup}.

Our primary focus is the cyclic graphs of $2$-Frobenius groups.   A group $G$ is a \textit{$2$-Frobenius group} if it has normal subgroups $K$ and $L$ such that $L$ is a Frobenius group with Frobenius kernel $K$ and $G/K$ is a Frobenius group with Frobenius kernel $L/K$.  The best known example of a $2$-Frobenius group is $S_{4}$, and in Section \ref{last sec}, we will present a number of other examples of $2$-Frobenius groups.  When $G$ is a $2$-Frobenius group, $\centerof{G}=\id$, and so, $\Delta(G)$ is a spanning subgraph of $\Gamma(G)$.

Parker in \cite{parker} shows that $\Gamma(G)$ is disconnected when $G$ is a Frobenius or a $2$-Frobenius group, and so $\Delta(G)$ is disconnected.  In fact, Parker proves that the only solvable groups $G$ with trivial center that have $\Gamma (G)$ disconnected are Frobenius and $2$-Frobenius groups. 

We demonstrate easily that the number of connected components for $\Gamma(G)$ and $\Delta(G)$ are the same when $G$ is a Frobenius group.  Also, it is not difficult to compute the number of connected components of $\Gamma(G)$ when $G$ is a $2$-Frobenius group.  However, counting the number of connected components of $\Delta (G)$ is more complicated when $G$ is $2$-Frobenius group, and this is the main goal of this paper.

We split this count  into three different cases.  The first case is when $K$ does not have prime power order, the second case is $K$ and $G/L$ are $p$-groups for some prime $p$, and the third case is when $K$ is a $p$-group for some prime $p$ and $G/L$ is not a $p$-group. We will see when $K$ does not have prime power order that the number of connected components for $\Delta(G)$ is the same as for $\Gamma(G)$, but in the other two cases, the count for the number of connected components of $\Delta(G)$ will differ from the count for $\Gamma(G)$.

We first have the formula when $K$ does not have prime power order.

\begin{main} \label{main1}
Let $G$ be a $2$-Frobenius group with $K$ as in the definition.  If $|K|$ is divisible by at least two distinct prime numbers, then $\Delta(G)$ has $|K|+1$ connected components.
\end{main}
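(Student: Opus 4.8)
The plan is to partition the vertices of $\Delta(G)$ into one ``large'' component built around $K$ together with a family of small cliques coming from the Frobenius structure, and to show there are exactly $|K|$ of the latter. Throughout I would recall the standard structure of a $2$-Frobenius group: $K=\fit{G}$ is nilpotent with $\centralizer{G}{K}=\centerof{K}$, both $L/K$ and $G/L$ are cyclic, and, writing $H\le L$ for a complement to $K$ in $L$ (so $H\cong L/K$ is cyclic and $L\setminus K$ is the disjoint union of the $[L:H]=|K|$ conjugates $H^{x}$ for $x\in K$), each nonidentity element of $L\setminus K$ lies in exactly one such conjugate.

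First I would let $\mathcal{B}$ be the component of $\Delta(G)$ containing $K^{\#}$ and verify that $K^{\#}$ really lies in a single component. This is where the hypothesis that $|K|$ has at least two prime divisors enters decisively: writing $K=\prod_{p}K_{p}$, any $k\in K^{\#}$ can have one of its primary components swapped out for a nontrivial element $z\in\centerof{K_{q}}$ of a \emph{different} prime $q$, since commuting elements of coprime order generate a cyclic group. Chaining such moves connects every element of $K^{\#}$ to a fixed central element, so $\Delta(K)$ is connected.

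Next I would absorb all of $G\setminus L$ into $\mathcal{B}$; I expect this to be the main obstacle. If $\langle g\rangle\cap K\neq\id$ then $g$ is already adjacent to a nonidentity element of $K$, so the work is for $g\in G\setminus L$ with $\langle g\rangle\cap K=\id$. The subtle point is that $\gcd(|K|,|G/L|)$ need \emph{not} be $1$ in this case — there are genuine $2$-Frobenius groups with $|K|$ divisible by two primes in which a prime of $G/L$ also divides $|K|$ — so one cannot simply declare $g$ to have order prime to $|K|$. Instead I would argue one primary part at a time. Fix a primary part $g_{r}$ of $g$ (an $r$-element) and a prime $q\mid|K|$ with $q\neq r$, which exists precisely because $|K|$ has two prime divisors; the claim is $\centralizer{K_{q}}{g_{r}}\neq\id$. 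Granting it, $g_{r}$ is adjacent to a nontrivial $q$-element of $K$ (coprime orders), while $g$ is adjacent to $g_{r}$ inside the cyclic group $\langle g\rangle$, so $g\in\mathcal{B}$. To prove the claim I would pass to $V=K_{q}/\Phi(K_{q})$: the conjugation action on $V$ factors through $G/K$ (distinct Sylow subgroups of $K$ centralize one another, and $K_{q}$ acts by inner automorphisms, trivially on its Frattini quotient), so $\bar{L}=L/K$ and $\overline{g_{r}}$ act on $V$. Restricting to the Frobenius subgroup $\bar{S}=\bar{L}\rtimes\langle\overline{g_{r}}\rangle$ — whose order is prime to $q$, and in which $\bar{L}$ acts fixed-point-freely because $L$ is Frobenius — Clifford theory applies; no irreducible constituent of $V$ is trivial on $\bar{L}$, and a Mackey computation shows each such constituent restricts to $\langle\overline{g_{r}}\rangle$ as a multiple of the regular representation, hence contains the trivial representation. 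Thus $\overline{g_{r}}$ has nonzero fixed points on $V$, and coprime action lifts this to $\centralizer{K_{q}}{g_{r}}\neq\id$.

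Finally I would identify the small components and count. For $h$ a nonidentity element of a conjugate $H^{x}$, the Frobenius structure of $G/K$ gives $\centralizer{G/K}{hK}=L/K$, so $\centralizer{G}{h}\le L$, and then the Frobenius structure of $L$ gives $\centralizer{G}{h}=\centralizer{L}{h}=H^{x}$. Since adjacency in $\Delta(G)$ requires commuting, \emph{every} neighbor of $h$ lies in the cyclic group $H^{x}$; hence $(H^{x})^{\#}$ is at once a clique and a full connected component, and distinct conjugates give distinct, non-adjacent components. Combining the three steps, $\Delta(G)$ consists of the single component $\mathcal{B}=K^{\#}\cup(G\setminus L)$ together with the $|K|$ components $(H^{x})^{\#}$, for a total of $|K|+1$.
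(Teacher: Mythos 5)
Your overall architecture matches the paper's exactly: you split the vertex set into $K^{\#}\cup(G\setminus L)$ (one large component) and the $|K|$ conjugates $(H^{x})^{\#}$ (the small components); your treatment of the small components is essentially the paper's Lemma \ref{Hconnected}, and your connectivity argument for $K^{\#}$ is the paper's Lemma \ref{nilpotentnotapgroup}. Where you genuinely diverge is the crucial step of dragging $G\setminus L$ into the large component. The paper passes to a prime-order power $x=d^{t}$, applies Thompson's theorem on fixed-point-free automorphisms of prime order (its Lemma \ref{thomp}) inside the $2$-Frobenius quotient $G/K_{0}$, where $K_{0}$ is the normal $q$-complement of $K$, and then lifts the centralizing element back into the Sylow $q$-subgroup $Q$ of $K$ using $Q\nsub G$; the quotient trick is precisely what forces the centralizing element to be a $q$-element so that coprimality yields adjacency. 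You instead keep a full primary part $g_{r}$ (prime-power order, not just prime order) and prove $\centralizer{K_{q}}{g_{r}}\neq\id$ directly by Clifford--Mackey theory for the Frobenius group $\bar{L}\rtimes\langle\overline{g_{r}}\rangle$ acting on $K_{q}/\Phi(K_{q})$: fixed-point-freeness of $\bar{L}$ forces trivial inertia, the restriction to $\langle\overline{g_{r}}\rangle$ of each constituent is free, and coprime action lifts fixed points back to $K_{q}$. Your route trades Thompson's nilpotency theorem for a standard representation-theoretic fact about coprime Frobenius actions; both are legitimate, and yours avoids the reduction to prime order and the quotient-lifting step, at the cost of invoking modular Clifford theory.

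There is one gap you must patch. The Frobenius subgroup $\bar{S}=\bar{L}\rtimes\langle\overline{g_{r}}\rangle$ only makes sense if $\overline{g_{r}}\notin\bar{L}$, i.e.\ $g_{r}\notin L$, and your hypotheses ($g\in G\setminus L$ and $\langle g\rangle\cap K=\id$) do not by themselves guarantee this for an arbitrary nontrivial primary part: a priori $g_{r}$ could lie in $L\setminus K$, hence in a conjugate of $H$ --- and for such an element the claim $\centralizer{K_{q}}{g_{r}}\neq\id$ is actually \emph{false}, since Frobenius complements of $L$ act fixed-point-freely on $K$. This bad case cannot really occur, but ruling it out requires an argument: if $g_{r}\in(H^{x})^{\#}$, then $g\in\centralizer{G}{g_{r}}\le L$ (the centralizer computation from your final step), contradicting $g\notin L$. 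Alternatively, choose $r$ to be a prime dividing the order of $gL$ in $G/L$; then $g_{r}\notin L$ automatically. Either one-line repair closes the gap, but as written your second step silently depends on your third, so you should either reorder the steps or make the choice of $r$ explicit.
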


In the remaining two cases, $K$ is a group of prime power order.  We see that the counts in these situations are more complicated.  We write $m_{p}(G)$ to denote the number of subgroups of order $p$ of $G$.  Next, we find the formula for the case that $K$ and $G/L$ are $p$-groups for some prime $p$.

\begin{main} \label{main2}
Let $G$ be a $2$-Frobenius group, and assume that $K$ and $G/L$ are $p$-groups for some prime $p$, where $K$ and $L$ are as in the definition.  Then $\Delta(G)$ has $|K|+m_{p}(G)$ connected components.
\end{main}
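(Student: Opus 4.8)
The plan is to reduce the entire count to the subgraph of $\Delta(G)$ induced on the set $P$ of elements of prime order. The key observation is that the nonidentity elements of any cyclic subgroup form a clique in $\Delta(G)$ (every subgroup of a cyclic group is cyclic), so each element $x$ is joined to a prime-order element of $\langle x\rangle$, and any edge ``$\langle x,y\rangle$ cyclic'' forces all prime-order elements of $\langle x,y\rangle$ into a single clique. Pushing a path down to its prime-order elements one vertex at a time then shows that two prime-order elements lie in the same component of $\Delta(G)$ if and only if they lie in the same component of the induced subgraph $\Delta(G)[P]$; since every component of $\Delta(G)$ meets $P$, the two graphs have the same number of components. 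Thus I would first record this reduction (it is short, and variants appear in the companion work) and then only analyze $P$. Here the adjacency rule is transparent: two prime-order elements of the same prime order are adjacent iff they generate the same subgroup, while two prime-order elements of different primes are adjacent iff they commute. Hence it is natural to pass to the ``subgroup graph'' whose vertices are the prime-order subgroups, with an edge between subgroups of coprime order exactly when they centralize each other; its components are in bijection with those of $\Delta(G)$.

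Next I would fix the arithmetic. Since $L=K\rtimes U$ is Frobenius with kernel the $p$-group $K$, the complement $U\cong L/K$ has order $m$ coprime to $p$; and $G/L$ is a $p$-group by hypothesis, so the primes dividing $|G|$ are $p$ together with the primes dividing $m$. Crucially, $U$ is simultaneously a Frobenius complement (of $L$) and nilpotent (being the Frobenius kernel of $G/K$), so every Sylow subgroup of $U$ is cyclic or generalized quaternion; in particular $U$ has a \emph{unique} subgroup of order $q$ for each prime $q\mid m$. The heart of the argument is then to show that no element of order $p$ centralizes any element of order $q$ with $q\mid m$. I would split this according to where the $p$-element sits: every $p$-element of $L$ lies in $K$, and for $x\in K$ of order $p$ and $y$ of order $q$ (so $y$ lies in a conjugate $U^{k}$), the relation $\centralizer{L}{y}\le U^{k}$ in the Frobenius group $L$ forces $x\notin \centralizer{L}{y}$; every remaining $p$-element lies in $G\setminus L$, and passing to the Frobenius group $G/K$ (kernel $U$, complement $\cong G/L$ acting fixed-point-freely) shows its image cannot centralize the nontrivial kernel image of $y$. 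Consequently $\Delta(G)[P]$ has no edge between a $p$-subgroup and a $q$-subgroup.

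It remains to count the two resulting families of components. The $p'$-elements of $G$ are exactly the nonidentity elements of the $L$-conjugates of $U$, and distinct conjugates meet trivially, so each subgroup of prime order $q\mid m$ lies in a unique $U^{k}$. Within a single $U^{k}$, the unique prime-order subgroups are pairwise connected: if $m$ has two prime divisors they commute by nilpotency, and if $U$ is a $q$-group there is only one such subgroup to begin with; hence each $U^{k}$ contributes exactly one component. Since distinct complement-conjugates share no commuting nonidentity elements (again because $\centralizer{L}{u}\le U^{k}$ for $u\in (U^{k})^{\#}$) and are not bridged by any $p$-subgroup, these give $[L:U]=|K|$ distinct components. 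On the other side, every subgroup of order $p$ is isolated in $\Delta(G)[P]$: it has no edge to another $p$-subgroup (same prime) and none to a $q$-subgroup (previous paragraph), so the order-$p$ subgroups contribute precisely $m_{p}(G)$ further components. Adding the two families yields $|K|+m_{p}(G)$.

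The step I expect to be the main obstacle is establishing connectivity \emph{inside} each conjugate $U^{k}$ in all cases, and in particular ruling out the scenario where $U$ is a prime power with many subgroups of order $q$, which would inflate the count; this is exactly where the classical structure of Frobenius complements (forcing a unique subgroup of each prime order) is indispensable. The other delicate point is the two-level fixed-point-free argument in the middle paragraph, where the $p$-elements outside $L$ must be handled in the quotient $G/K$ rather than in $G$ itself. As a consistency check I would run the argument on $S_{4}$, where $K=V_{4}$, $U\cong C_{3}$ and $G/L\cong C_{2}$: the four Sylow $3$-subgroups give $|K|=4$ components and the nine involutions give $m_{2}(S_{4})=9$, for a total of $13$, matching the formula.
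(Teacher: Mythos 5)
Your proposal is correct, but it takes a genuinely different route from the paper. The paper's proof is anchored on the subgroup $D$ (the preimage of a Frobenius complement of $G/K$): since $D$ is a Hall subgroup of $G$ (Lemma \ref{DisHall}), the hypothesis makes $D$ a Sylow $p$-subgroup, so that $G\setminus\bigcup_{g\in G}H^{g}=\left(\bigcup_{g\in G}D^{g}\right)^{\#}$ consists entirely of $p$-elements; Lemma \ref{swimmingintoD} shows this set is not adjacent to anything outside it, and then a minimal-path argument (using that a cyclic group has a unique subgroup of order $p$) shows distinct subgroups of order $p$ lie in distinct components, giving $m_{p}(G)$ components there on top of the $|K|$ components from Lemma \ref{Hconnected}. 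You never invoke $D$ or its Hall property at all. Instead you (i) prove a general reduction lemma --- the components of $\Delta(G)$ biject with the components of the induced subgraph on prime-order elements, equivalently with the components of the graph on prime-order subgroups --- and (ii) classify all possible edges in that graph: same-prime subgroups are adjacent only if equal, and an order-$p$ subgroup is never adjacent to an order-$q$ subgroup ($q\mid |H|$), which you rule out by two centralizer arguments, one inside the Frobenius group $L$ (where $\centralizer{L}{y}$ lies in the complement conjugate containing $y$) and one in the quotient Frobenius group $G/K$ for the $p$-elements outside $L$. Both proofs ultimately rest on the same elementary fact about unique subgroups of prime order in cyclic groups, but the mechanisms for isolating the $p$-part are different: the paper's is ``geographic'' (a partition of $G$ into $\bigcup H^{g}$ and the Sylow conjugates $\bigcup D^{g}$), yours is ``local'' (edge-by-edge exclusion). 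What the paper's route buys is brevity, given that the structural lemmas on $D$ are already in place; what your route buys is a reusable, self-contained reduction (your prime-order-subgroup graph also re-derives Lemma \ref{Hconnected} along the way rather than citing it) at the cost of needing slightly more Frobenius theory --- nilpotency of the kernel $L/K$ via Thompson's theorem and cyclic-or-quaternion Sylow subgroups of complements --- to guarantee a unique subgroup of each prime order in $H$, facts which the paper gets from its Lemma \ref{Hcyclic}. Your counts at the end (the $|K|$ conjugates of $H$, each a single component, plus $m_{p}(G)$ isolated order-$p$ subgroups) are all justified, and the $S_{4}$ sanity check matches the paper's first example.
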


Finally, we compute the formula when $K$ is a $p$-group and $G/L$ is not a $p$-group for some prime $p$.

\begin{main} \label{main3}
Let $G$ be a $2$-Frobenius group, and let $p$ be a prime number.
Assume that $K$ is a $p$-group for some prime $p$ and that $G/L$ is not a $p$-group, where $K$ and $L$ are as in the definition.  Then the number of connected components of $\Delta(G)$ is $$|K|+|L:K|+m_{p}^{\ast},$$ where $m_{p}^{\ast}$ is the number of subgroups of order $p$ in $G$ that are not centralized by an element of prime order other than $p$.
\end{main}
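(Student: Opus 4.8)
The plan is to show that the number of components is governed by three transparent families of vertices, organized through the quotient $\bar G := G/K$. I will use the following structural facts, which hold for a $2$-Frobenius group under the present hypotheses: $K$ is a normal $p$-subgroup and the Sylow $p$-subgroup of $L$; the complement $H\cong L/K$ is a cyclic $p'$-group acting fixed-point-freely on $K$, with $\normalizer{L}{H}=H$, so $H$ has $|L:H|=|K|$ conjugates in $L$, partitioning $L\setminus K$ into $|K|$ cyclic blocks; and $\bar G$ is a Frobenius group with cyclic kernel $\bar L=L/K$ and cyclic complement $\bar M\cong G/L$, having $|\bar L|=|L:K|$ complement conjugates, each of which is a clique in $\Delta(\bar G)$. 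Since a quotient of a cyclic group is cyclic, adjacency in $\Delta(G)$ projects to adjacency in $\Delta(\bar G)$ for elements outside $K$, and collapses to the identity for elements of $K$; this is the tool that lets me transfer the (easy) Frobenius count for $\bar G$ down to $G$. Throughout I identify components of $\Delta(G)$ with the classes of the relation generated by co-membership in a cyclic subgroup.

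The first and third families are quick. For $h\in L\setminus K$ the image $\bar h$ is a nonidentity kernel element of $\bar G$, so $\centralizer{\bar G}{\bar h}\le\bar L$ by the Frobenius property; hence $\centralizer{G}{h}\le L$, and then $\centralizer{L}{h}$ lies in the unique complement $H^{a}$ containing $h$. Every $\Delta(G)$-neighbor of $h$ therefore lies in $H^{a}$, so the component of $h$ is exactly the clique $(H^{a})^{\#}$; this yields the $|K|$ pairwise-disjoint components accounting for the term $|K|$. For the $p$-elements, a maximal cyclic $p$-subgroup has a unique subgroup of order $p$, so two $p$-elements can be adjacent only when they share that subgroup, and a prime-order element $y$ of order $q\neq p$ is adjacent to a $p$-element $x$ precisely when $y$ centralizes the order-$p$ subgroup of $\langle x\rangle$. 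Consequently an order-$p$ subgroup that is centralized by no prime-order element of order $\neq p$ is isolated from the rest of the graph, its cluster of $p$-elements forming a single component, and distinct such subgroups give distinct components; this produces exactly the $m_{p}^{\ast}$ components.

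The remaining vertices — the elements outside $L$ together with the order-$p$ subgroups that \emph{are} centralized by some element of prime order $\neq p$ — all project into the complement conjugates of $\bar G$, and I claim they contribute exactly $|L:K|$ components. The key is a no-merging lemma: if $x\in K^{\#}$ is centralized by two elements $r_{1},r_{2}\in G\setminus L$, then $\bar r_{1}$ and $\bar r_{2}$ lie in the same complement conjugate of $\bar G$. Indeed, otherwise $\langle\bar r_{1},\bar r_{2}\rangle$ is not contained in a single complement, hence meets $\bar L$ nontrivially; lifting, some $w\in\langle r_{1},r_{2}\rangle$ has $\bar w\in\bar L^{\#}$, so $w\in L\setminus K$ centralizes $x$, contradicting $\centralizer{L}{x}\le K$. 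Because adjacency among $p$-elements preserves the common order-$p$ subgroup, this lemma shows that every relevant vertex carries a well-defined complement-conjugate label and that no path in $\Delta(G)$ — whose only way to cross between complements would be through a $K$-element adjacent to vertices over two different complements — can change this label. Hence vertices over distinct complement conjugates lie in distinct components, giving at most $|L:K|$ of them.

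The hard part will be the matching lower bound: proving that all vertices with a \emph{fixed} complement-conjugate label are genuinely connected, so that each of the $|L:K|$ complements contributes one component rather than several. Over the preimage $T$ of a complement $\bar M^{\bar g}$ one has $T=K\langle c\rangle$ with $\langle c\rangle\cong\bar M$ cyclic; the complement elements are distributed among the $K$-conjugates of $\langle c\rangle$, and I expect to stitch these together through the fixed points $\centralizer{K}{c^{k}}$ and the mixed-order elements of $\langle x\rangle\times\langle c\rangle$ with $x\in\centralizer{K}{c}$, as the $S_{4}$-type examples suggest. Making this argument uniform is the crux, and it is complicated by two features: when $K$ is nonabelian the fixed-point sets $\centralizer{K}{c^{k}}$ vary with $k$, so a single common anchor need not exist and a chain must be produced; and when $p\mid|G/L|$ the subgroup $K$ is not a Sylow $p$-subgroup of $G$, so $T$ contains genuine $p$-elements outside $K$. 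I anticipate treating the dichotomy ``$K\in\syl{p}{G}$ or not'' separately, which is exactly why the statement is phrased with $m_{p}^{\ast}$ counting order-$p$ subgroups anywhere in $G$; once connectivity is established, adding the three families gives $|K|+|L:K|+m_{p}^{\ast}$.
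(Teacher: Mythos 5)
Your three-family decomposition is the same one the paper uses, and the parts you actually prove are correct. The $|K|$ components coming from the conjugates of $H$ are handled essentially as in the paper's Lemma \ref{Hconnected}; your "isolated cluster" argument for an order-$p$ subgroup with $p$-group centralizer reproduces the content of the paper's Lemma \ref{pgroupcentralizer} and correctly yields $m_p^{\ast}$ components; and your no-merging lemma is a valid separation argument, equivalent to the paper's observation that for $d_1\in D\setminus K$ and $d_2\in D^{g}\setminus K$ with $D\neq D^{g}$, the subgroup $\langle d_1,d_2\rangle$ contains a nontrivial element $h$ of a conjugate of $H$, whence $\centralizer{K}{d_1}\cap\centralizer{K}{d_2}\le\centralizer{K}{h}=\id$.

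However, there is a genuine gap, and you name it yourself: you never prove that all remaining vertices lying over a \emph{fixed} complement conjugate of $G/K$ form a single connected component ("I expect to stitch these together," "I anticipate treating the dichotomy"). Without this, your argument only shows the number of components is at least $|K|+|L:K|+m_p^{\ast}$; nothing rules out one of the complement classes splitting into several components, so the formula is not established. This missing step is where the bulk of the paper's proof lives, and it does not follow from the ingredients you have assembled. The paper's route is: (i) every nonidentity element of $\bigcup_{g\in G}D^{g}$ is connected to an element of order $p$, by raising to a prime-order power and applying Lemma \ref{thomp}; (ii) if $x,y\in D$ have the same prime order $q\neq p$, then cyclicity of $G/L$ forces $L\langle x\rangle=L\langle y\rangle$, hence $K\langle x\rangle=K\langle y\rangle$ by Dedekind's Lemma, and both $x$ and $y$ are adjacent to every element of $\left(\centerof{K\langle x\rangle}\cap K\right)^{\#}$; this common anchor set is nontrivial because $\centralizer{\centerof{K}}{x}\neq\id$, which is proved by applying Lemma \ref{thomp} to the auxiliary $2$-Frobenius group $\centerof{K}H^{d}\langle x\rangle$; (iii) elements of distinct prime orders in $\pi(\normalizer{D}{H})\setminus\{p\}$ are linked through a cyclic Hall subgroup $Q$ of $\normalizer{D}{H}$ after conjugating inside $D$. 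Your worry that "a single common anchor need not exist" when $K$ is nonabelian is exactly the obstruction, but the resolution is not a case split on whether $K\in\syl{p}{G}$; it is the $\centerof{K\langle x\rangle}\cap K$ device, which works uniformly, together with the cyclicity of $G/L$ and of $\normalizer{D}{H}$ — facts your outline records but never exploits.
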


In the final section, we compute the number of connected components of $\Delta(G)$ for several different $2$-Frobenius groups $G$.

Portions of this paper appear as a part of the first author’s dissertation under the direction of the second author at Kent State University.
Some of this research was conducted during the summer 2019 REU at Kent State University with the funding of NSF Grant DMS-1653002.
We thank the NSF and Profs. Soprunova and Chebotar for their support.
We also thank Stefano Schmidt, Eyob Tsegaye, and Gabe Udell for several discussions.

\section{Structural properties of $2$-Frobenius groups}

We start by fixing some notation.  Let $G$ be a group, and fix elements $x,y\in G$.  When the subgroup $\langle x,y\rangle$ is cyclic, we write $x\approx y$.  Note that $x \approx y$ indicates that either $x$ and $y$ are adjacent in the cyclic graph of $G$ or $x=y$.  Similarly, when $xy=yx$, we write $x\sim y$, and this indicates either $x$ and $y$ are adjacent in the commuting graph or $x=y$.


In the literature, results concerning the structure of a $2$-Frobenius group are often merely mentioned or used implicitly.
As such, we feel that it is appropriate to write a brief section that gathers up some of these structural properties, along with the easy proofs.  These results are known in the folklore and are not original to us.

When $G$ is a $2$-Frobenius group, we use $K$ and $L$ to denote the subgroups that are defined in the definition.  We write $H$ for a Frobenius complement for $L$, we set $N=\normalizer{G}{H}$, and we take to $D$ to denote a subgroup of $G$ such that $D/K$ is a Frobenius complement for $G/K$.  When $G$ is a $2$-Frobenius group, we chose $H$ to be a Frobenius complement for the normal subgroup $L$, but we now show that $H$ is also a Frobenius kernel of its normalizer.

\begin{lem}
If $G$ is a $2$-Frobenius group, then $N$ is a Frobenius group with Frobenius kernel $H$.
\end{lem}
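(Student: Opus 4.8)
\emph{The plan.} I want to prove the lemma by showing that $N$ is a complement to $K$ in $G$ and that the canonical projection $\pi\colon G\to G/K$ restricts to an isomorphism $N\cong G/K$ which carries $H$ onto the Frobenius kernel $L/K$. Granting this, the conclusion follows purely by transport of structure, since $G/K$ is a Frobenius group with kernel $L/K$ by hypothesis. To assemble the identification $N\cong G/K$ I need two facts: that $K\cap N=\id$, and that $G=KN$.

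\emph{Step 1: $K\cap N=\id$.} Suppose $k\in K$ normalizes $H$. For each $h\in H$ the commutator $\inverse{h}\inverse{k}hk$ lies in $H$, because $\inverse{k}hk\in H$; and it also lies in $K$, because $\inverse{h}\inverse{k}h\in K$ (as $K\nsub L$) and $k\in K$. Since $L=KH$ is a Frobenius group we have $H\cap K=\id$, so $k$ centralizes $H$. But the Frobenius condition on $L$ forces $\centralizer{K}{h}=\id$ for every nontrivial $h\in H$, whence $k=1$. Thus $\centralizer{K}{H}=\id$, and in particular $K\cap N=\id$.

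\emph{Step 2: $G=KN$.} Here I would run a Frattini-type argument. Because $L$ is a Frobenius group with kernel $K$, its complements to $K$ are mutually conjugate (the orders $|K|$ and $|L:K|$ are coprime and $K$ is nilpotent, so Schur--Zassenhaus applies, and in fact the complements are exactly the $H^{k}$ with $k\in K$). Given $g\in G$, normality of $K$ and $L$ shows that $H^{g}$ is again a complement to $K$ in $L$, so $H^{g}=H^{k}$ for some $k\in K$; then $g\inverse{k}\in N$ and $g\in NK$. Hence $G=KN$. Combined with Step~1, this exhibits $N$ as a complement to $K$ in $G$.

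\emph{Step 3: transport of structure.} Since $K\cap N=\id$ and $KN=G$, the restriction $\pi|_{N}\colon N\to G/K$ is an isomorphism. Because $HK=L$ and $H\cap K=\id$, this isomorphism carries $H$ bijectively onto $L/K$. As $G/K$ is a Frobenius group with Frobenius kernel $L/K$, pulling this structure back through $\pi|_{N}$ shows that $N$ is a Frobenius group whose kernel is the $\pi|_{N}$-preimage of $L/K$, which is precisely $H$. I expect the one genuine obstacle to be Step~2 — recognizing $N$ as an actual complement to $K$ rather than a subgroup merely containing $H$ — since once the isomorphism $N\cong G/K$ is in hand, the Frobenius structure of $N$ is immediate and no fixed-point-free action on $H$ needs to be checked by hand.
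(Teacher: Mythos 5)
Your proof is correct, and its skeleton matches the paper's: a Frattini argument gives $G=KN$, the trivial intersection $K\cap N=\id$ exhibits $N$ as a complement to $K$, and the isomorphism $N\cong G/K$ transfers the Frobenius structure of $G/K$ to $N$. Where you genuinely diverge is the final step, the identification of the kernel. The paper argues by uniqueness: the Frobenius kernel of $N$ has order $|L:K|=|H|$, and since $H\nsub N$, the kernel must be $H$ by Theorem 12.6.8 of \cite{wrscott}. You instead track $H$ through the isomorphism: since $HK=L$ and $H\cap K=\id$, the quotient map restricted to $N$ carries $H$ bijectively onto $L/K$, so $H$ is exactly the preimage of the kernel of $G/K$, and no external uniqueness theorem is needed. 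Your route buys self-containedness — you also spell out the Frattini argument and the proof that $\normalizer{K}{H}=\id$, both of which the paper asserts without justification — while the paper's version is shorter at the price of a citation. One small remark: your appeal to the conjugacy part of Schur--Zassenhaus rests on solvability of $K$, which you get from nilpotency of Frobenius kernels (Thompson's theorem); that is legitimate, and the paper uses nilpotency of $K$ elsewhere, but conjugacy of Frobenius complements is itself a standard fact that could be cited directly.
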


\begin{proof}
By a Frattini Argument, $G=KN$.
We have $G/K\cong N$ as $K\cap N=\normalizer{K}{H}=\id$, and so $N$ is a Frobenius group.
The Frobenius kernel of $N$ has order $|L:K|=|H|$.
Since $H$ is normal in $N$, the Frobenius kernel of $N$ must be $H$ by Theorem 12.6.8 in \cite{wrscott}. 
\end{proof}

It follows that  $H$ is simultaneously a Frobenius kernel and a Frobenius complement.  We now show that its structure is quite limited.

\begin{lem}\label{Hcyclic}
If $G$ is a $2$-Frobenius group, then $H$ is cyclic of odd order.
\end{lem}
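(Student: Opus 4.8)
The plan is to exploit the observation recorded just above the statement: $H$ is \emph{simultaneously} a Frobenius complement (for the normal subgroup $L$) and a Frobenius kernel (for its normalizer $N$). Each role supplies one half of the conclusion. Being a kernel forces $H$ to be nilpotent, and being a complement constrains its Sylow subgroups so tightly that, once nilpotency is in hand, cyclicity of the odd part is automatic; a separate fixed-point-free argument then rules out even order.

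First I would assemble the two standard structural facts. Since $H$ is the Frobenius kernel of $N$ by the previous lemma, a theorem of Thompson gives that $H$ is nilpotent, and hence $H$ is the direct product of its Sylow subgroups. Since $H$ is a Frobenius complement (for $L$), every Sylow subgroup of $H$ is cyclic or generalized quaternion, and in particular the Sylow subgroups for odd primes are all cyclic. Writing $H = H_{2}\times H_{2'}$, where $H_{2}\in\syl{2}{H}$ and $H_{2'}$ is the Hall $2'$-subgroup, the factor $H_{2'}$ is a direct product of cyclic groups of pairwise coprime order and is therefore itself cyclic. It thus remains only to show that $H_{2}=\id$, for then $H=H_{2'}$ is cyclic of odd order.

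The main point, and the step I expect to be the crux, is ruling out even order. Suppose $H_{2}\neq\id$. Because $H_{2}$ is cyclic or generalized quaternion, it contains a unique involution $t$; and since $H$ is nilpotent, $H_{2}$ is its unique Sylow $2$-subgroup, so every involution of $H$ lies in $H_{2}$. Hence $t$ is the \emph{unique} involution of $H$, and consequently $t$ is fixed by every automorphism of $H$. Now I invoke the previous lemma again: $N$ is a Frobenius group with kernel $H$, so it has a nontrivial Frobenius complement $C$, and $C$ acts on $H$ by conjugation fixed-point-freely, meaning $\centralizer{H}{c}=\id$ for every nonidentity $c\in C$. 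But conjugation by each $c\in C$ is an automorphism of $H$ and therefore fixes the unique involution $t$, so $t\in\centralizer{H}{c}$ for all $c\in C$. Taking any $c\neq 1$ yields $t=1$, a contradiction. Therefore $H_{2}=\id$, $|H|$ is odd, and $H$ is cyclic, as claimed.
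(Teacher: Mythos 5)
Your proposal is correct and follows essentially the same route as the paper: both exploit the dual role of $H$ as Frobenius complement (unique involution, hence an automorphism-fixed element) and Frobenius kernel of $N$ (fixed-point-free action of the complement of $N$) to rule out even order, and then combine nilpotency of the kernel with the cyclicity of odd-order Sylow subgroups of a Frobenius complement to conclude $H$ is cyclic. The only differences are cosmetic: you establish nilpotency and the odd part's cyclicity before ruling out the $2$-part, and you derive the unique involution from the cyclic/generalized-quaternion Sylow $2$-subgroup rather than citing it directly for the complement.
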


\begin{proof}
Seeking a contradiction, we assume $|H|$ is even.
As $H$ is a Frobenius complement, it has a unique involution; call it $z$.
Note that $o(z)=o(z\sigma)$ for every $\sigma\in\textnormal{Aut}(H)$.
Hence $z=z\sigma$, and so $z\in\textnormal{Fix}_{H^{\#}}(\textnormal{Aut}(H))$.
In particular, $H$ does \textit{not} admit a fixed-point-free automorphism.
But $H$ is also a Frobenius kernel.
These conditions are not compatible; thus $|H|$ must be odd.

As $H$ is a Frobenius kernel, $H$ is nilpotent and every Sylow subgroup of $H$ is normal in $H$.
As $H$ is a Frobenius complement of odd order, the Sylow subgroups of $H$ are cyclic.
Thus $H$ is cyclic. 
\end{proof}

The structure of a Frobenius complement for $N$ is also easily determined.

\begin{lem}\label{normalizerDHcyclic}
If $G$ is a $2$-Frobenius group, then $\normalizer{D}{H}$ is cyclic.
\end{lem}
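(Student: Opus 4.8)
The plan is to identify $\normalizer{D}{H}$ as a Frobenius complement of the group $N$ from the first lemma of this section, and then to leverage the fact that $H$ is cyclic. First I would note the trivial identity $\normalizer{D}{H}=D\cap\normalizer{G}{H}=D\cap N$, since an element of $D$ normalizes $H$ exactly when it lies in both $D$ and $N=\normalizer{G}{H}$. So the whole statement reduces to showing that $D\cap N$ is a Frobenius complement of $N$, together with the structural fact that such a complement is cyclic here.

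To see that $D\cap N$ is a complement to $H$ in $N$, I would first record that $L\cap D=K$: because $G/K$ is Frobenius with kernel $L/K$ and complement $D/K$, we have $(L/K)\cap(D/K)=\id$, and since $K\le L\cap D$ this forces $L\cap D=K$. As $H\le L$, this gives $D\cap H=(L\cap D)\cap H=K\cap H=\id$, hence $(D\cap N)\cap H=\id$. For the order, the first lemma gives $G=KN$ with $K\cap N=\id$, so $N\cong G/K$ and $|N:H|=|G:L|=|D:K|$; tracking the isomorphism $N\cong G/K$ one checks that $D\cap N$ maps onto the subgroup $D/K$, so $|D\cap N|=|D:K|=|N:H|$ and therefore $(D\cap N)H=N$. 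Thus $D\cap N$ is a complement to the kernel $H$. Since a Frobenius kernel and complement have coprime orders, Schur--Zassenhaus makes all such complements conjugate, so $D\cap N$ is genuinely a Frobenius complement of $N$ (equivalently, it is isomorphic to the Frobenius complement $D/K$ of $G/K$).

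The core of the argument is then brief. By Lemma \ref{Hcyclic}, $H$ is cyclic, so $\textnormal{Aut}(H)$ is abelian. In the Frobenius group $N$ one has $\centralizer{N}{H}=H$, since no nonidentity element outside $H$ centralizes a nonidentity element of $H$; combined with $(D\cap N)\cap H=\id$, this shows the complement $D\cap N$ acts faithfully on $H$ by conjugation. Hence $D\cap N$ embeds in the abelian group $\textnormal{Aut}(H)$ and is itself abelian. Finally, an abelian Frobenius complement must be cyclic: every Sylow subgroup of a Frobenius complement is cyclic or generalized quaternion, the latter being nonabelian, so an abelian Frobenius complement has all Sylow subgroups cyclic and is therefore cyclic.

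I expect the main obstacle to be the bookkeeping that verifies $D\cap N$ is a complement (the intersection computation $D\cap H=\id$ and the matching order count), rather than the concluding automorphism argument, which is essentially forced once cyclicity of $H$ and faithfulness of the conjugation action are in hand. The one point that deserves a careful phrasing is the final implication ``abelian Frobenius complement is cyclic,'' which I would justify through the Sylow structure of Frobenius complements as above.
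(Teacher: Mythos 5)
Your proposal is correct and follows essentially the same route as the paper: both identify $\normalizer{D}{H}$ as a Frobenius complement of the Frobenius group $N$ with kernel $H$, observe that it acts faithfully on $H$ and hence embeds in $\textnormal{Aut}(H)$, which is abelian by Lemma \ref{Hcyclic}, and conclude that an abelian Frobenius complement is cyclic. The only difference is bookkeeping: the paper obtains the factorization $N=H\normalizer{D}{H}$ in one line from $G=HD$ and Dedekind's Lemma, whereas you verify the complement property by an order count through the isomorphism $N\cong G/K$ together with Schur--Zassenhaus.
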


\begin{proof}
Note that $G=HD$, and so $N=N\cap HD=H(N\cap D)=H\normalizer{D}{H}$, by Dedekind's Lemma.
As $H\cap\normalizer{D}{H}=\id$, the subgroup $\normalizer{D}{H}$ is a Frobenius complement for $N$.
Observe that $\centralizer{D}{H}=\id$.
Thus $\normalizer{D}{H}$ embeds in $\textnormal{Aut}(H)$.
By Lemma \ref{Hcyclic}, $\textnormal{Aut}(H)$ is abelian.
Hence $\normalizer{D}{H}$ is an abelian Frobenius complement, which yields the conclusion.
\end{proof}

Next, we obtain a factorization of the subgroup $D$.

\begin{lem}\label{factorizationofD}
If $G$ is a $2$-Frobenius group, then $D=K\normalizer{D}{H}$.
\end{lem}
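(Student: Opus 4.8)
The plan is to reduce the claim to an order count and then extract every needed cardinality from the lemmas already proved. First I would observe that $K\le D$, since $D/K$ is a subgroup of $G/K$, and that $K\nsub G$ forces $K\nsub D$. Because $\normalizer{D}{H}\le D$ as well, the product $K\normalizer{D}{H}$ is automatically a subgroup of $D$ (a normal subgroup times any subgroup). Thus one containment is free, and it suffices to prove $|K\normalizer{D}{H}|=|D|$.

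Next I would compute $|K\normalizer{D}{H}|$ via the product formula $|K\normalizer{D}{H}|=|K|\,|\normalizer{D}{H}|/|K\cap\normalizer{D}{H}|$. The crucial intersection is $K\cap\normalizer{D}{H}$. Since every element of $\normalizer{D}{H}$ normalizes $H$, we have $\normalizer{D}{H}\le N=\normalizer{G}{H}$, so $K\cap\normalizer{D}{H}\le K\cap N$. The proof of the first lemma already records that $K\cap N=\normalizer{K}{H}=\id$, whence $K\cap\normalizer{D}{H}=\id$ and $|K\normalizer{D}{H}|=|K|\,|\normalizer{D}{H}|$.

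It then remains to identify $|\normalizer{D}{H}|$ with $|D:K|$. For this I would reuse the factorization $N=H\normalizer{D}{H}$ with $H\cap\normalizer{D}{H}=\id$ that was established inside the proof of Lemma \ref{normalizerDHcyclic}; it gives $|\normalizer{D}{H}|=|N:H|$. Combining this with the isomorphism $G/K\cong N$ from the first lemma yields $|N:H|=|G:K|/|H|=|D:K|$, using $|G|=|K|\,|H|\,|D:K|$. Substituting back gives $|K\normalizer{D}{H}|=|K|\,|D:K|=|D|$, and equality of the subgroup with $D$ follows. There is no genuine obstacle here: the argument is essentially bookkeeping, and the only point requiring care is assembling the two intersection facts $K\cap N=\id$ and $H\cap\normalizer{D}{H}=\id$ together with the isomorphism $G/K\cong N$, all of which are already available from the preceding lemmas.
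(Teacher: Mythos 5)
Your proof is correct, but it takes a genuinely different route from the paper's. The paper disposes of the lemma in a single line with Dedekind's modular law: since $K\le D$ and $G=KN$ (the Frattini factorization recorded in the first lemma), one gets $D=D\cap G=D\cap KN=K(D\cap N)=K\normalizer{D}{H}$, with no order computation at all. You instead establish the containment $K\normalizer{D}{H}\le D$ and then match orders, which requires importing three further facts: $K\cap N=\normalizer{K}{H}=\id$, the factorization $N=H\normalizer{D}{H}$ with $H\cap\normalizer{D}{H}=\id$ from the proof of Lemma \ref{normalizerDHcyclic}, and the Frobenius order identity $|G:K|=|H|\,|D:K|$. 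All of these are legitimately available at this point (Lemma \ref{normalizerDHcyclic} precedes this lemma, so there is no circularity), and each step of your arithmetic checks out, so the argument is sound. What the paper's approach buys is economy and independence: it needs nothing beyond $G=KN$ and the modular law, whereas your argument leans on Lemma \ref{normalizerDHcyclic} --- and, somewhat ironically, the identity $N=H\normalizer{D}{H}$ you borrow from there was itself proved by exactly the Dedekind computation you are working to avoid, just applied to $N$ rather than to $D$. What your approach buys is that it makes explicit the equality $|D|=|K|\,|\normalizer{D}{H}|$ (equivalently $|\normalizer{D}{H}|=|D:K|$), which is precisely the quantitative consequence the paper extracts afterwards in Lemma \ref{DisHall}; in that sense your proof folds the content of Lemmas \ref{factorizationofD} and \ref{DisHall} into one computation.
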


\begin{proof}
Simply observe that $D=D\cap G=D\cap KN=K(D\cap N)=K\normalizer{D}{H}$ by Dedekind's Lemma.
\end{proof}

It is now easy to see that $D$ is a Hall subgroup of $G$.

\begin{lem}\label{DisHall}
If $G$ is a $2$-Frobenius group, then $D$ is a Hall subgroup of $G$.
\end{lem}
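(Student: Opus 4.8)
The plan is to verify that $D$ is a Hall subgroup by checking the defining condition directly, namely that $\gcd(|D|,|G:D|)=1$. The strategy is to express both $|D|$ and the index $|G:D|$ in terms of the three quantities $|K|$, $|H|$, and $|\normalizer{D}{H}|$, and then to read off the needed coprimality from the fact that in any Frobenius group the kernel and complement have relatively prime orders. All of the structural input I need is already available from the preceding lemmas.

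First I would pin down the index $|G:D|$. From the first lemma of this section we have $G=KN$ with $K\cap N=\id$, so $|G|=|K|\,|N|$; and since $N$ is Frobenius with kernel $H$ and, by the proof of Lemma \ref{normalizerDHcyclic}, complement $\normalizer{D}{H}$, we get $|N|=|H|\,|\normalizer{D}{H}|$. Hence $|G|=|K|\,|H|\,|\normalizer{D}{H}|$. On the other side, Lemma \ref{factorizationofD} gives $D=K\normalizer{D}{H}$, and because $\normalizer{D}{H}\leq N$ we have $K\cap\normalizer{D}{H}\leq K\cap N=\id$; therefore $|D|=|K|\,|\normalizer{D}{H}|$. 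Dividing, $|G:D|=|H|$.

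It then remains to confirm that $|D|=|K|\,|\normalizer{D}{H}|$ is coprime to $|H|$. I would handle the two factors separately using the coprimality of kernel and complement orders: applied to the Frobenius group $L$ (kernel $K$, complement $H$) this yields $\gcd(|K|,|H|)=1$, and applied to the Frobenius group $N$ (kernel $H$, complement $\normalizer{D}{H}$) it yields $\gcd(|\normalizer{D}{H}|,|H|)=1$. Consequently $\gcd(|D|,|H|)=1$, i.e.\ $\gcd(|D|,|G:D|)=1$, and $D$ is a Hall subgroup of $G$.

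I do not anticipate a serious obstacle here; the argument is essentially order bookkeeping. The only points that require care are the two triviality facts $K\cap\normalizer{D}{H}=\id$ and the identification of $\normalizer{D}{H}$ as the Frobenius complement of $N$ (so that its order is coprime to $|H|$), both of which follow cleanly from $K\cap N=\id$ and the structure of $N$ recorded in Lemma \ref{normalizerDHcyclic}.
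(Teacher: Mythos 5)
Your proof is correct and follows essentially the same route as the paper's: both compute $|D|=|K|\,|\normalizer{D}{H}|$ from Lemma \ref{factorizationofD}, identify $|G:D|=|H|$, and conclude from the coprimality of Frobenius kernel and complement orders (applied to $L$ and to $N$). You simply make explicit the order bookkeeping and coprimality facts that the paper compresses into ``The result follows.''
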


\begin{proof}
Notice that $|G:D|=|H|$.
Using Lemma \ref{factorizationofD}, $|D|=|K||\normalizer{D}{H}|$.
The result follows.
\end{proof}

\section{Preliminaries}

In this section, we include some specific results concerning the cyclic graph of a group.  We begin with two results that appeared in \cite{directprod}.  The first is a basic fact about the cyclic graph of a $p$-group that is used in the proof of Theorem \ref{Dpgroup}.   The proof of this lemma is easy and can be found as Lemma 2.3 of \cite{directprod}.

\begin{lem}\label{cyclicgraphofapgroup}
If $G$ is a $p$-group for some prime number $p$, then $\Delta(G)$ has $m_{p}(G)$ connected components.  
\end{lem}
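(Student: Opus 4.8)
The plan is to exhibit a bijection between the connected components of $\Delta(G)$ and the subgroups of order $p$ in $G$. The structural fact I would exploit is that a cyclic $p$-group contains exactly one subgroup of order $p$. Since $G$ is a $p$-group, every nonidentity element $x$ has order a power of $p$, so $\langle x\rangle$ is a nontrivial cyclic $p$-group and therefore contains a unique subgroup of order $p$; call it $\phi(x)$. This defines a map $\phi$ from $G^{\#}$ to the set of order-$p$ subgroups of $G$, and my aim is to show that its fibers are precisely the connected components of $\Delta(G)$.

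First I would check that $\phi$ is constant along edges. If $x\approx y$, then $\langle x,y\rangle$ is cyclic, hence a cyclic $p$-group, and so it has a unique subgroup $P$ of order $p$. Both $\langle x\rangle$ and $\langle y\rangle$ are subgroups of $\langle x,y\rangle$, so each contains $P$ as its unique order-$p$ subgroup; thus $\phi(x)=P=\phi(y)$. Consequently $\phi$ is constant on each connected component. Since every subgroup $P$ of order $p$ satisfies $\phi(z)=P$ for each of its $p-1$ nonidentity elements $z$ (as $\langle z\rangle=P$), the map induced by $\phi$ from components to order-$p$ subgroups is surjective, which already shows $\Delta(G)$ has at least $m_{p}(G)$ components.

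For the reverse inequality, I would fix a subgroup $P=\langle z\rangle$ of order $p$ and show that every $x\in G^{\#}$ with $\phi(x)=P$ lies in the same component as $z$. Indeed, $\phi(x)=P$ means $z\in\langle x\rangle$, so $\langle x,z\rangle=\langle x\rangle$ is cyclic and $x\approx z$. Hence the entire fiber $\phi^{-1}(P)$ is contained in a single component. Combined with the previous paragraph, this shows $\phi$ induces a bijection between the connected components of $\Delta(G)$ and the subgroups of order $p$, yielding exactly $m_{p}(G)$ components.

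There is no serious obstacle here: the argument is entirely structural, and the only point requiring care is the well-definedness of $\phi$, which rests on the uniqueness of the order-$p$ subgroup in a cyclic $p$-group. That same uniqueness fact drives both inequalities, so once it is in hand the conclusion follows immediately.
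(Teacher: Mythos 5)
Your proof is correct and complete: the map $\phi$ sending $x\in G^{\#}$ to the unique subgroup of order $p$ in $\langle x\rangle$ is constant along edges, and each fiber is connected because every element of $\phi^{-1}(\langle z\rangle)$ is adjacent to (or equal to) $z$, so fibers and components coincide and there are exactly $m_{p}(G)$ of them. Note that this paper does not actually print a proof of this lemma --- it defers to Lemma 2.3 of \cite{directprod} --- but your argument is the standard one for this fact, resting exactly on the uniqueness of the order-$p$ subgroup of a cyclic $p$-group, so there is no meaningful divergence to report.
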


If $G$ is a group and $x,y\in G^{\#}$, then a sufficient condition for $x$ and $y$ to be adajcent in $\Delta(G)$ is that $x$ and $y$ have coprime orders and commute.  The following lemma is a consequence of this fact.  Again, the proof of this lemma is easy and can be found as Corollary 4.2 in \cite{directprod}.

\begin{lem}\label{nilpotentnotapgroup}
If $G$ is a nilpotent group such that $|G|$ is divisible by at least two distinct primes, then $\Delta(G)$ is connected.
\end{lem}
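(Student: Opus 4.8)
The plan is to exploit the sufficient condition recorded just above the statement: if two distinct nonidentity elements commute and have coprime orders, then they are adjacent in $\Delta(G)$. First I would reduce the problem to elements of prime order. For any $x \in G^{\#}$, the subgroup $\langle x\rangle$ is cyclic; if $p$ is a prime dividing $o(x)$, then $x^{o(x)/p}$ has order $p$ and $\langle x, x^{o(x)/p}\rangle = \langle x\rangle$, so $x \approx x^{o(x)/p}$. Hence every vertex lies in the same connected component as some element of prime order, and it suffices to show that all prime-order elements lie in a single component.

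Next I would bring in nilpotency. Since $G$ is nilpotent, it is the internal direct product $G = P_1 \times \cdots \times P_k$ of its (normal) Sylow subgroups, and the hypothesis that $|G|$ has at least two prime divisors forces $k \geq 2$. Write $p_i$ for the prime dividing $|P_i|$. As $P_i$ is the unique Sylow $p_i$-subgroup, every element of order $p_i$ lies in $P_i$, and elements lying in distinct direct factors commute. I would then fix two anchor elements $a \in P_1$ of order $p_1$ and $b \in P_2$ of order $p_2$. Since $a$ and $b$ commute and have coprime orders, they are adjacent, so they lie in a common component of $\Delta(G)$.

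Finally, let $u$ be an arbitrary prime-order element, say $o(u) = p_i$, so that $u \in P_i$. Because $p_1$ and $p_2$ are distinct, at least one of them differs from $p_i$; choosing the associated anchor $c \in \{a, b\}$, the elements $u$ and $c$ lie in different direct factors, hence commute and have coprime orders, so $u \approx c$. Thus $u$ lies in the common component of $a$ and $b$. Consequently every prime-order element lies in one component, and by the first step $\Delta(G)$ is connected.

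This argument is essentially routine once the coprime-commuting criterion is available, and indeed the statement is cited as Corollary 4.2 of \cite{directprod}. The only subtle point is the last step for elements of order $p_1$ or $p_2$: one cannot join such an element directly to an anchor of the same prime order, which is precisely why two anchors are introduced and why the hypothesis $k \geq 2$ (equivalently, that $|G|$ has at least two prime divisors) is indispensable.
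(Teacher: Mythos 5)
Your proof is correct and follows exactly the route the paper intends: the paper states the lemma as a consequence of the coprime-commuting adjacency criterion and defers the easy details to Corollary 4.2 of \cite{directprod}, and your argument (reduce to prime-order elements, decompose the nilpotent group into its Sylow factors, and link everything through two anchors of distinct prime orders) is precisely that argument written out in full.
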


The following technical lemma will be used in the proof of Theorem \ref{completecount}.

\begin{lem}\label{pgroupcentralizer}
Let $G$ be a group, and let $a\in G$ with $o(a)=p$, where $p$ is a prime number.
Let $\Xi$ be the connected component of $\Delta(G)$ that contains $a$.
If $\centralizer{G}{a}$ is a $p$-group, then $a\approx b$ for each $b\in\Xi\setminus\{a\}$.
In particular, the only elements of order $p$ connected to $a$ belong to the set $\langle a\rangle\setminus\id$.
\end{lem}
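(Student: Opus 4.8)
The plan is to isolate one key observation about the neighbors of $a$ and then propagate it along paths, so that the connected component collapses into a ``star'' centered at $a$. The observation I would establish first is: \emph{if $c$ is any nonidentity element with $a\approx c$, then $c$ is a $p$-element and $a\in\langle c\rangle$.} Indeed, $\langle a,c\rangle$ being cyclic forces $a$ and $c$ to commute, so $c\in\centralizer{G}{a}$; since $\centralizer{G}{a}$ is a $p$-group and contains both $a$ and $c$, the group $\langle a,c\rangle$ is a cyclic $p$-group. A cyclic $p$-group has a unique subgroup of order $p$, and because $a$ has order $p$ that subgroup is $\langle a\rangle$. As $\langle c\rangle$ is a nontrivial subgroup of the cyclic $p$-group $\langle a,c\rangle$, it contains this unique minimal subgroup, whence $a\in\langle c\rangle$.

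The heart of the argument is then a transitivity step routed through $a$: \emph{if $a\approx c$ with $c\neq 1$ and $c\approx d$, then $a\approx d$.} From the observation I have $a\in\langle c\rangle$, and from $c\approx d$ the group $\langle c,d\rangle$ is cyclic; hence $a\in\langle c\rangle\le\langle c,d\rangle$, so $\langle a,d\rangle\le\langle c,d\rangle$. Since a subgroup of a cyclic group is cyclic, $\langle a,d\rangle$ is cyclic, i.e.\ $a\approx d$. This step is exactly where the hypothesis that $\centralizer{G}{a}$ is a $p$-group is used, via the previous paragraph; I expect that isolating the claim $a\in\langle c\rangle$ is the only real obstacle, as everything afterward is formal.

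With the transitivity step available, I would finish by induction along a path. Let $b\in\Xi\setminus\{a\}$ and fix a path $a=x_{0},x_{1},\dots,x_{n}=b$ in $\Delta(G)$ with consecutive vertices adjacent; note every $x_{i}\neq 1$. I claim $a\approx x_{i}$ for all $i\geq 1$. The base case $a\approx x_{1}$ is the first edge. For the inductive step, assume $a\approx x_{i}$: if $x_{i}=a$, then the edge $x_{i}\approx x_{i+1}$ is literally $a\approx x_{i+1}$; otherwise $x_{i}\neq 1$ and the transitivity step applied to $a\approx x_{i}$ and $x_{i}\approx x_{i+1}$ yields $a\approx x_{i+1}$. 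Taking $i=n$ gives $a\approx b$, which is the main assertion.

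For the final clause, suppose $b\in\Xi$ has order $p$ with $b\neq a$. The main assertion gives $a\approx b$, so $\langle a,b\rangle$ is a cyclic $p$-group containing the two subgroups $\langle a\rangle$ and $\langle b\rangle$, each of order $p$. By uniqueness of the order-$p$ subgroup in a cyclic $p$-group, $\langle a\rangle=\langle b\rangle$, so $b\in\langle a\rangle$. Thus every element of order $p$ in $\Xi$ lies in $\langle a\rangle\setminus\id$, as required.
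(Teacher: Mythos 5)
Your proof is correct and rests on exactly the same key mechanism as the paper's: a neighbor $c$ of $a$ lies in $\centralizer{G}{a}$ and hence is a $p$-element, so the unique subgroup of order $p$ in the cyclic group $\langle a,c\rangle$ forces $a\in\langle c\rangle$, whence $\langle a,d\rangle\le\langle c,d\rangle$ transfers adjacency past $c$. The only difference is packaging: the paper runs this step once as a contradiction at a vertex with $d(a,c)=2$, while you propagate it by induction along a path, which is the same argument.
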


\begin{proof}
Suppose that the result is false.
Thus, there exist elements in $\Xi$ that are \textit{not} adjacent to $a$.
In particular, there must exist a vertex $c\in\Xi$ with $d(a,c)=2$.
Let $a\approx b\approx c$ be a path of length $2$.
Since $b\in\centralizer{G}{a}$, we see that $b$ is a $p$-element.
Hence $|\langle b^{t}\rangle|=p$ for some integer $t$.
The subgroup $\langle a,b\rangle$ is cyclic and, therefore, has a unique subgroup of order $p$, forcing $\langle a\rangle=\langle b^{t}\rangle$.
Now, $a$ is a power of $b$, and so $$\langle a,c\rangle\le\langle b,c\rangle,$$ which yields that $a\approx c$, a contradiction.

Finally, let $b\in\Xi$ with $o(b)=p$.
Then $b\approx a$, and so $\langle a,b\rangle$ has a unique subgroup of order $p$.
Hence $\langle a\rangle=\langle b\rangle$, and it follows that $b\in\langle a\rangle\setminus\id$.
\end{proof}

We now record a basic fact about Frobenius groups, which is a consequence of Theorem 4.1.8 in \cite{kurz}.
We will use this fact in Theorem \ref{completecount}.

\begin{lem}\label{frobcompcontain}
Let $G$ be a Frobenius group with Frobenius kernel $K$ and Frobenius complement $H$.
If $U\le G$ with $U\cap K=\id$, then $U\le H^{g}$ for some $g\in G$.
\end{lem}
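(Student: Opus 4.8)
The plan is to reduce the statement to a standard conjugacy theorem for Hall subgroups. I would first extract two structural facts about the Frobenius group $G$, both of which are contained in Theorem 4.1.8 of \cite{kurz}: the orders $|K|$ and $|H|$ are coprime, and the Frobenius kernel $K$ is nilpotent, hence solvable. These are exactly the ingredients needed to convert the condition $U\cap K=\id$ into a statement about coprime orders.

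Next I would control the order of $U$. Since $K\nsub G$, the product $UK$ is a subgroup of $G$, and because $U\cap K=\id$ we have $|UK|=|U|\,|K|$. As $UK\le G$, it follows that $|U|$ divides $|G:K|=|H|$. Combining this with the coprimality of $|K|$ and $|H|$, I conclude that $\gcd(|U|,|K|)=1$; in other words, writing $\pi$ for the set of primes dividing $|H|$, the subgroup $U$ is a $\pi$-subgroup of $G$, while $K$ is the normal Hall $\pi'$-subgroup of $G$ and $H$ is a complement to $K$.

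With this setup, the result is now the conjugacy part of the Schur--Zassenhaus theorem: in a group possessing a normal Hall $\pi'$-subgroup $K$ with complement $H$, every $\pi$-subgroup is contained in a conjugate of $H$. Applying this to $U$ yields $U\le H^{g}$ for some $g\in G$, as desired. The one hypothesis that must be verified before invoking this theorem is solvability, since the classical conjugacy statement requires either $K$ or $G/K$ to be solvable, and here it is supplied for free by the nilpotency of the Frobenius kernel.

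The main obstacle is conceptual rather than computational: one must recognize that the condition $U\cap K=\id$ --- which says that $U$ meets the set of fixed-point-free elements only trivially, so that every nonidentity element of $U$ lies in some point stabilizer --- is equivalent to the purely arithmetic condition that $|U|$ is coprime to $|K|$. Once this translation is made, no delicate permutation-theoretic argument is needed, and the conclusion drops out of the Hall--Schur--Zassenhaus machinery. I would take care only to justify the coprimality of $|K|$ and $|H|$, since it is the single external fact on which the whole reduction rests.
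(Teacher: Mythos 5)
Your proof is correct, but it takes a genuinely different route from the paper, which in fact offers no argument at all: the lemma is simply recorded as a consequence of Theorem 4.1.8 of Kurzweil--Stellmacher, i.e.\ of the standard permutation-theoretic development of Frobenius groups (conjugates of $H$ and $K$ partition $G$, every element outside $K$ fixes exactly one point in the action on the cosets of $H$, and a counting argument then forces a subgroup meeting $K$ trivially to fix a common point). Your reduction instead runs through Hall--Schur--Zassenhaus machinery, and the skeleton is sound: $UK$ is a subgroup since $K \nsub G$, so $|U|$ divides $|G:K|=|H|$, which is coprime to $|K|$, making $K$ a normal Hall subgroup with complement $H$ and $U$ a $\pi$-subgroup for $\pi=\pi(H)$. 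Two remarks on what this costs. First, the conjugacy part of Schur--Zassenhaus is most often stated for \emph{complements} only; to handle an arbitrary $\pi$-subgroup $U$ you should apply it inside $UK$, where $U$ and $UK\cap H$ are both complements to $K$ (by Dedekind, $UK=(UK\cap H)K$ since $G=HK$, and $(UK\cap H)\cap K=H\cap K=\id$), giving $U=(UK\cap H)^{x}\le H^{x}$ for some $x\in UK$. Some texts fold this corollary into the statement of the theorem, but it deserves a sentence, since as written your appeal to ``the conjugacy part'' is slightly stronger than the usual formulation. Second, the solvability hypothesis you supply via nilpotency of $K$ is Thompson's theorem, a deep result---though one the paper itself uses freely (e.g.\ in Lemma \ref{thomp}), so this is consistent with the paper's toolkit. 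What your approach buys is a self-contained, conceptually transparent derivation identifying the lemma as a pure Hall-subgroup statement; what the paper's citation buys is economy and reliance only on elementary fixed-point counting, with no need for Thompson's theorem or Schur--Zassenhaus.
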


At this point, we are able to count the number of connected components of $\Delta(G)$ for a Frobenius group $G$.

\begin{thrm}\label{Frob grp}
Let $G$ be a Frobenius group with Frobenius kernel $K$.
If $K$ is a $p$-group for some prime number $p$, then $\Delta(G)$ has $|K|+m_{p}(K)$ connected components.
If $K$ is not a group of prime power order, then $\Delta(G)$ has $|K|+1$ connected components.
\end{thrm}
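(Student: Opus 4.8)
The plan is to exploit the partition of the nonidentity elements of a Frobenius group into the kernel and the conjugates of a complement, and to show that no edge of $\Delta(G)$ crosses this partition. Fix a Frobenius complement $H$, so that $\normalizer{G}{H}=H$ and $H$ therefore has exactly $|G:H|=|K|$ conjugates, any two of which meet only in the identity. Since $|K|$ and $|H|$ are coprime and $K$ is a normal Hall subgroup, every nonidentity element of $G$ is either a $\pi(K)$-element lying in $K$ or a $\pi(H)$-element; in the latter case the cyclic group it generates meets $K$ trivially, so by Lemma \ref{frobcompcontain} it lies in a unique conjugate $H^{g}$. Thus $G^{\#}=K^{\#}\sqcup\bigsqcup_{g}(H^{g})^{\#}$, a disjoint union over the $|K|$ conjugates. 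The key observation is that adjacency forces containment in a single block: if $x\approx y$ then $y$ commutes with $x$, i.e.\ $y\in\centralizer{G}{x}$; and in a Frobenius group $\centralizer{G}{k}\le K$ for every $k\in K^{\#}$, while $\centralizer{G}{x}\le H^{g}$ for every $x\in (H^{g})^{\#}$. Consequently every connected component of $\Delta(G)$ lies entirely inside one block, and the subgraph induced on a block is exactly $\Delta(K)$ or a copy of $\Delta(H^{g})\cong\Delta(H)$.

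It follows that the number of components of $\Delta(G)$ equals the number of components of $\Delta(K)$ plus $|K|$ times the number of components of $\Delta(H)$, and the two cases of the theorem reduce to counting these. For the kernel, if $K$ is a $p$-group then Lemma \ref{cyclicgraphofapgroup} gives exactly $m_{p}(K)$ components; if $|K|$ is divisible by at least two primes then $K$, being a Frobenius kernel, is nilpotent, so Lemma \ref{nilpotentnotapgroup} shows $\Delta(K)$ is connected, contributing a single component. In either case the kernel accounts for the summand $m_{p}(K)$ or $1$.

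It remains to show that $\Delta(H)$ is connected, so that each of the $|K|$ complements contributes exactly one component and the total is $|K|+m_{p}(K)$ or $|K|+1$. Here I would use the structure of Frobenius complements. If $|H|$ is odd then $H$ is cyclic, and in a cyclic group any two elements generate a cyclic subgroup, so $\Delta(H)$ is complete and hence connected. If $|H|$ is even, then, its Sylow $2$-subgroup being cyclic or generalized quaternion, $H$ has a unique involution $z$, which is therefore central. I then claim $z$ is a dominating vertex: for $x\in H^{\#}$, if $o(x)$ is even then $\langle x\rangle$ contains $z$ and $\langle x,z\rangle=\langle x\rangle$ is cyclic, while if $o(x)$ is odd then $\langle x,z\rangle=\langle x\rangle\times\langle z\rangle$ is cyclic of order $2\,o(x)$; either way $z\approx x$. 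Hence $\Delta(H)$ is connected.

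The main obstacle is this last step, and specifically the structural input on Frobenius complements: that a complement of odd order must be cyclic, and that a complement of even order has a single, necessarily central, involution. Both rest on the classical fact that every subgroup of a Frobenius complement of order $pq$ (for primes $p,q$, not necessarily distinct) is cyclic. From this one deduces that a nonabelian odd-order complement would contain a nonabelian, hence noncyclic, subgroup of order $pq$, and that two distinct involutions would likewise generate a noncyclic subgroup of order a power of $2$; both are impossible. I would either cite these facts or insert a short preliminary lemma establishing them before proving that $\Delta(H)$ is connected.
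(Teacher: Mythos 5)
Your skeleton matches the paper's (sketched) proof: partition $G^{\#}$ into $K^{\#}$ and the $|K|$ conjugates of $H^{\#}$, use centralizers to show no edge of $\Delta(G)$ crosses the partition, count the components inside $K^{\#}$ with Lemmas \ref{cyclicgraphofapgroup} and \ref{nilpotentnotapgroup}, and show each complement contributes a single component. The partition argument, the kernel count, and the even-order complement argument (unique involution, hence central, hence a dominating vertex) are all correct. The genuine error is in the odd-order case: a Frobenius complement of odd order need \emph{not} be cyclic. The standard counterexample is the nonabelian group of order $63$, $H=\langle a,b\mid a^{7}=b^{9}=1,\ a^{b}=a^{2}\rangle$. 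Here $b^{3}$ is central, the Sylow subgroups are cyclic, and the only subgroups of order $pq$ are the cyclic groups of order $9$ and $\langle a\rangle\times\langle b^{3}\rangle\cong Z_{21}$; since $H$ is solvable and satisfies these $pq$-conditions, Zassenhaus's criterion (the converse of the necessity fact you quote) shows that $H$ admits a fixed-point-free action, i.e.\ it \emph{is} a Frobenius complement. This same group exposes the flaw in your supporting deduction: a nonabelian group need not contain a nonabelian subgroup of order exactly $pq$ --- this $H$ is minimal nonabelian, as every proper subgroup of it is cyclic. Note that the paper's Lemma \ref{Hcyclic} concludes cyclicity only for a complement that is \emph{simultaneously a Frobenius kernel}, where nilpotency is available; that hypothesis is absent in Theorem \ref{Frob grp}.

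The gap is localized and repairable, essentially by extending your even-order argument. An odd-order Frobenius complement has cyclic Sylow subgroups, hence is metacyclic: $H=\langle a\rangle\rtimes\langle b\rangle$ with $\langle a\rangle$ normal and $o(a)$, $o(b)$ coprime. If $o(b)=1$, then $H$ is cyclic and you are done; otherwise pick a prime $q\mid o(b)$ and set $z=b^{o(b)/q}$. For each prime $p\mid o(a)$, the $pq$-condition forces $z$ to centralize the subgroup of order $p$ of $\langle a\rangle$; but an automorphism of an odd cyclic $p$-group acting trivially on its subgroup of order $p$ has $p$-power order, while $z$ has order $q$ coprime to $o(a)$, so $z$ centralizes all of $\langle a\rangle$ and hence $z\in\centerof{H}$. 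Moreover $\langle z\rangle$ is the \emph{unique} subgroup of order $q$ in $H$: a second one, $\langle w\rangle$, would give $\langle z,w\rangle\cong Z_{q}\times Z_{q}$, a noncyclic subgroup of order $q\cdot q$, violating the $pq$-condition. Your dominating-vertex argument now runs verbatim with this $z$: if $q\mid o(x)$ then the order-$q$ subgroup of $\langle x\rangle$ is $\langle z\rangle$, so $z\in\langle x\rangle$; otherwise $\langle x,z\rangle=\langle x\rangle\times\langle z\rangle$ is cyclic. One further small point: your dichotomy ``every nonidentity element lies in $K$ or is a $\pi(H)$-element'' does not follow merely from $K$ being a normal Hall subgroup (consider $Z_{6}$); you need the Frobenius structure --- if $x$ had mixed order, its nontrivial $\pi(H)$-part would centralize its nontrivial $\pi(K)$-part, contradicting $\centralizer{G}{k}\le K$ for $k\in K^{\#}$ --- but this is patchable with facts you already use.
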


\begin{proof}[Sketch of proof]
Let $H$ be a Frobenius complement of $G$.
Each conjugate of $H^{\#}$ constitutes a connected component of $\Delta(G)$, and there are $|K|$ such conjugates.
Since $$K^{\#}=G\setminus\bigcup_{g\in G}H^{g},$$ the conclusion follows from the nilpotency of $K$, Lemma \ref{cyclicgraphofapgroup}, and Lemma \ref{nilpotentnotapgroup}.
\end{proof}

We next present a few basic facts about $2$-Frobenius groups.  We first show that the elements adjacent in $\Delta (G)$ to elements in $D \setminus K$ must lie in $D$.

\begin{lem}\label{swimmingintoD}
If $G$ is a $2$-Frobenius group and $g\in G^{\#}$ with $g\approx d$ for some $d\in D\setminus K$, then $g\in D$.
\end{lem}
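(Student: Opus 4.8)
The plan is to pass to the quotient $\bar{G} := G/K$ and to exploit the Frobenius structure there. By the definition of a $2$-Frobenius group, $\bar{G}$ is a Frobenius group with Frobenius kernel $\bar{L} := L/K$, and by our choice of $D$ its Frobenius complement is $\bar{D} := D/K$. Writing $\bar{x}$ for the image of $x \in G$, the first step is routine: since $g \approx d$, the subgroup $\langle g,d\rangle$ is cyclic, hence so is its image $\langle \bar{g},\bar{d}\rangle$ in $\bar{G}$. In particular this image is abelian, so $\bar{g} \in \centralizer{\bar{G}}{\bar{d}}$. Crucially, the hypothesis $d \in D\setminus K$ says precisely that $\bar{d}$ is a \emph{nontrivial} element of the complement $\bar{D}$.

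The heart of the argument is then to show that $\centralizer{\bar{G}}{\bar{d}} \le \bar{D}$, since this gives $\bar{g}\in\bar{D}$ and therefore $g\in D$ at once. I would argue as follows. As $\bar{D}$ is a Frobenius complement, it acts fixed-point-freely on $\bar{L}$, so $\centralizer{\bar{L}}{\bar{d}} = \id$. Given any $\bar{x}\in\centralizer{\bar{G}}{\bar{d}}$, the nontrivial element $\bar{d}$ lies in both $\bar{D}$ and $\bar{D}^{\bar{x}}$; because a Frobenius complement is a trivial-intersection set with $\normalizer{\bar{G}}{\bar{D}} = \bar{D}$, this forces $\bar{D}^{\bar{x}} = \bar{D}$ and hence $\bar{x}\in\bar{D}$, as desired. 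Alternatively, I could route this through Lemma \ref{frobcompcontain}: putting $U = \langle \bar{g},\bar{d}\rangle$, any nontrivial element of $U\cap\bar{L}$ would commute with $\bar{d}$ and so contradict the fixed-point-free action on $\bar{L}$, whence $U\cap\bar{L} = \id$; then $U \le \bar{D}^{\bar{h}}$ for some $\bar{h}$, and since $\bar{d}\in\bar{D}\cap\bar{D}^{\bar{h}}$ is nontrivial, the trivial-intersection property yields $\bar{D}^{\bar{h}} = \bar{D}$, so $\bar{g}\in U\le\bar{D}$.

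The only real content lies in this centralizer-containment step, and I expect it to be the main obstacle — not because it is deep, but because one must invoke the trivial-intersection and self-normalizing properties of Frobenius complements correctly. This is exactly the place where the assumption $d\notin K$, i.e. $\bar{d}\neq 1$, is indispensable: without it $\bar{d}$ could be the identity of $\bar{G}$ and the whole argument would collapse. Everything else (cyclic $\Rightarrow$ abelian, and lifting $\bar{g}\in\bar{D}$ back to $g\in D$) is immediate, so once the containment is secured the proof is complete.
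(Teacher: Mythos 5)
Your proposal is correct and is essentially the paper's own argument: the paper also passes to $G/K$, observes that $Kg\in\centralizer{G/K}{Kd}$ because $\langle g,d\rangle$ is cyclic, and concludes via the containment $\centralizer{G/K}{Kd}\le D/K$ for the nontrivial complement element $Kd$, giving $g\in D$. The only difference is that the paper invokes this centralizer containment as a known fact about Frobenius groups, whereas you supply its (correct) proof from the trivial-intersection and self-normalizing properties of Frobenius complements.
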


\begin{proof}
Since $Kd\in (D/K)^{\#}$, $Kg\in\centralizer{G/K}{Kd}\le D/K$.
Hence $g\in D$.
\end{proof}

Recall that $H$ is a Frobenius complement of $L$.  Similar to the Frobenius group case, we see that the elements in $H^\#$ make up a connected component of $\Delta (G)$ when $G$ is a $2$-Frobenius group.

\begin{lem}\label{Hconnected}
If $G$ is a $2$-Frobenius group, then $H^{\#}$ is a connected component of $\Delta(G)$.
\end{lem}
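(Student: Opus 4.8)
The plan is to verify the two defining properties of a connected component: first that $H^{\#}$ induces a connected subgraph of $\Delta(G)$, and second that no vertex outside $H^{\#}$ is adjacent to a vertex of $H^{\#}$. The first property is immediate: by Lemma \ref{Hcyclic} the subgroup $H$ is cyclic, so for any $x,y\in H^{\#}$ the subgroup $\langle x,y\rangle$ lies in $H$ and is therefore cyclic. Hence $H^{\#}$ is in fact a clique of $\Delta(G)$, and in particular it is connected.

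For the second and substantive property, it suffices to prove that $\centralizer{G}{h}\le H$ for every $h\in H^{\#}$. Indeed, if $g\in G^{\#}$ satisfies $g\approx h$, then $g$ commutes with $h$, so $g\in\centralizer{G}{h}\le H$, which places $g$ in $H^{\#}$. The natural route is to localize the centralizer in two stages: first show $\centralizer{G}{h}\le N$, and then compute $\centralizer{N}{h}$ using the fact that $N$ is a Frobenius group with Frobenius kernel $H$.

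To show $\centralizer{G}{h}\le N$, I would write an arbitrary $g\in\centralizer{G}{h}$ as $g=kn$ with $k\in K$ and $n\in N$, which is possible since $G=KN$ and $K\cap N=\id$. Because $H\nsub N$, the element $h_{1}=nh\inverse{n}$ lies in $H^{\#}$, and the relation $gh\inverse{g}=h$ rearranges to the conjugacy $kh_{1}\inverse{k}=h$, which takes place inside $L$. Now $L$ is a Frobenius group with kernel $K$ and complement $H$, so the conjugates of $H$ by elements of $K$ form a trivial-intersection set, and $\normalizer{K}{H}=\id$; thus $H\cap kH\inverse{k}=\id$ whenever $k\neq 1$. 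Since $h=kh_{1}\inverse{k}$ lies in $H\cap kH\inverse{k}$ and $h\neq 1$, we are forced to have $k=1$, whence $g=n\in N$.

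Finally, with $\centralizer{G}{h}\le N$ in hand, I would compute $\centralizer{N}{h}$. Recall that $N$ is a Frobenius group with kernel $H$ and, by the proof of Lemma \ref{normalizerDHcyclic}, complement $\normalizer{D}{H}$; moreover $H$ is abelian by Lemma \ref{Hcyclic}. Writing a centralizing element as $h'c$ with $h'\in H$ and $c\in\normalizer{D}{H}$, and using that $H\nsub N$ together with the fixed-point-free action of the complement on the kernel, forces $c=1$; hence $\centralizer{N}{h}=H$. Combining the two stages gives $\centralizer{G}{h}=\centralizer{G}{h}\cap N=\centralizer{N}{h}=H$, completing the argument. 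I expect the main obstacle to be the first stage, namely controlling $\centralizer{G}{h}$ inside the full group $G$ rather than inside $N$ or $L$; the key is to exploit the normality of $H$ in $N$ to transfer the problem to a conjugacy relation in the Frobenius group $L$, where the trivial-intersection property of the complement finishes the job.
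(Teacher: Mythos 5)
Your proposal is correct and follows essentially the same route as the paper's proof: both factor an element commuting with $h\in H^{\#}$ via $G=KN$, use the trivial-intersection property of the Frobenius complement $H$ in $L$ to kill the $K$-part, and then invoke the Frobenius structure of $N$ with kernel $H$ to conclude the element lies in $H$. The only cosmetic differences are that you organize this as the two-stage claim $\centralizer{G}{h}\le N$ and $\centralizer{N}{h}=H$ (proving the latter directly from the complement $\normalizer{D}{H}$, where the paper simply cites $\centralizer{N}{h}\le H$), and you get connectedness of $H^{\#}$ from $H$ being cyclic rather than from the general fact about Frobenius complements.
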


\begin{proof}
The cyclic graph of a Frobenius complement is connected.  (See the proof of Theorem \ref{Frob grp}.  In fact, the diameter of the cyclic graph of a Frobenius complement is at most $2$.)
Let $h\in H^{\#}$, and suppose that $g\in G^{\#}$ with $g\approx h$.
Write $g=ak$, where $a\in N$, $k\in K$.
Observe that $h\in H\cap H^{ak}=H\cap H^{k}$ since $h=h^{g}=h^{ak}$.
Now $H=H^{k}$, and so $k\in K\cap H=\id$.
Hence $g=a\in\centralizer{N}{h}\le H$. 
\end{proof}

We now show that an element of a $2$-Frobenius group $G$ of prime order that does not lie in any conjugate of $H$ must centralize a nontrivial element of $K$.

\begin{lem}\label{thomp}
Let $G$ be a $2$-Frobenius group, and let $x\in G\setminus\bigcup_{g\in G} H^{g}$.
If $o(x)=p$, a prime, then $\centralizer{K}{x}\neq\id$.
\end{lem}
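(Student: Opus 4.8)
The plan is to determine precisely which elements the hypothesis excludes, reduce to a single essential case, and then reach a contradiction via Thompson's theorem on fixed-point-free automorphisms. First I would observe that every conjugate $H^{g}$ lies inside $L$, since $H\le L$ and $L\nsub G$; together with the fact that the conjugates of a Frobenius complement of $L$ cover $L\setminus K$, this shows that the nonidentity elements of $\bigcup_{g\in G}H^{g}$ are exactly the elements of $L\setminus K$. Consequently, the hypothesis $x\notin\bigcup_{g\in G}H^{g}$ forces $x$ to lie either in $K$ or outside $L$. If $x\in K$, then because $o(x)=p$ we have $x\in\centralizer{K}{x}$ and the conclusion is immediate; so the real content is the case $x\notin L$, which I treat next.

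The key step is to reduce the centralizer of $x$ in $L$ to its centralizer in $K$. Working in the Frobenius group $G/K$, whose kernel is $L/K$, the image $\overline{x}=Kx$ is nontrivial and lies outside the kernel (as $x\notin L$), so $\overline{x}$ sits in a Frobenius complement of $G/K$ and hence acts fixed-point-freely on $L/K$. Therefore any $y\in\centralizer{L}{x}$ has image in $\centralizer{L/K}{\overline{x}}=\id$, so $y\in K$; this gives $\centralizer{L}{x}=\centralizer{K}{x}$. I regard establishing this identity as the heart of the argument, since it promotes information about fixed points of $x$ in $K$ to information about its fixed points in all of $L$.

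To finish, I would argue by contradiction. Suppose $\centralizer{K}{x}=\id$. Then $\centralizer{L}{x}=\id$ as well, so conjugation by $x$ defines an automorphism of $L$ with no nontrivial fixed points; it is nontrivial (otherwise $\centralizer{L}{x}=L\neq\id$) and has order dividing $o(x)=p$, hence order exactly $p$. Thus $L$ admits a fixed-point-free automorphism of prime order, and Thompson's theorem forces $L$ to be nilpotent. But $L$ is a Frobenius group with nontrivial kernel $K$ and nontrivial complement $H$, and such a group is never nilpotent: nilpotency would make $H$ a direct factor centralizing $K$, contradicting $\centralizer{K}{H}=\id$. This contradiction yields $\centralizer{K}{x}\neq\id$.

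I expect the main obstacle to be the centralizer reduction $\centralizer{L}{x}=\centralizer{K}{x}$, which requires correctly invoking the fixed-point-free action of a complement on the kernel of the quotient Frobenius group $G/K$ and checking that commuting elements descend and lift appropriately. Once that identity is in place the contradiction is short, because the non-nilpotency of the Frobenius group $L$ and Thompson's theorem are both standard; in particular, no separate analysis of whether $p$ divides $|K|$ is needed, as the argument is uniform in $p$.
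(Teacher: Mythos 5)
Your proof is correct, and its engine is the same as the paper's: if $\centralizer{L}{x}$ were trivial, conjugation by $x$ would be a fixed-point-free automorphism of prime order of $L$, forcing the Frobenius group $L$ to be nilpotent by Thompson's theorem, a contradiction. Where you genuinely differ is in how the nontrivial fixed point is forced into $K$. The paper applies Thompson's theorem first to conclude $\centralizer{L}{x}\neq\id$ for every such $x$, and then invokes Lemma \ref{Hconnected}: since $x$ lies in no conjugate of $H$, it cannot centralize a nonidentity element of any $H^{g}$, so every nonidentity element of $\centralizer{L}{x}$ must lie in $L\setminus\bigcup_{g\in L}H^{g}=K^{\#}$. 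You instead split off the trivial case $x\in K$ and, for $x\notin L$, prove the identity $\centralizer{L}{x}=\centralizer{K}{x}$ by passing to the Frobenius quotient $G/K$: the image $\overline{x}$ lies outside the kernel $L/K$, hence in a complement, hence acts fixed-point-freely on $L/K$, so centralizing elements of $L$ are pushed down into $K$; only then do you run the Thompson contradiction. Your route is self-contained Frobenius theory and does not need the graph-flavored Lemma \ref{Hconnected}, while the paper's route avoids the case split and reuses a lemma it has already proved for other purposes; both localization mechanisms are valid. Your closing argument that a Frobenius group is never nilpotent (else $L=K\times H$, contradicting $\centralizer{K}{H}=\id$) is also correct, though it can be shortcut by noting that a Frobenius group has trivial center whereas a nontrivial nilpotent group does not.
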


\begin{proof}
Conjugation by $x$ induces an automorphism of $L$.
If $\centralizer{L}{x}=\id$, then $L$ admits a fixed-point-free automorphism of prime order and is therefore nilpotent by Theorem 10.2.1 in \cite{gorenstein}, a contradiction.
Hence $\centralizer{L}{x}\neq\id$.
By Lemma \ref{Hconnected}, $x$ must centralize an element in the set $L\setminus\bigcup_{g\in L} H^{g}=K^{\#}$.
\end{proof}

Let $G$ be a $2$-Frobenius group.
At this juncture, we are able to obtain a count on the number of connected components of $\Gamma(G)$.
As mentioned, Lemma \ref{Hconnected} actually shows that each $(H^{g})^{\#}$ ($g\in G$) constitutes a connected component of $\Gamma(G)$, and there are $|K|$ such conjugates.

As the subgroup $K$ is nilpotent, $\centerof{K}\ne\id$.
Fix $z\in\centerof{K}^{\#}$.
Now, let $g\in G\setminus\bigcup_{g\in G} H^{g}$ be arbitrary.
For some natural number $t$, the element $x=g^{t}$ has prime order.
(Of course, $x$ will lie outside $\bigcup_{g\in G} H^{g}$, too.)
Hence $\centralizer{K}{x}\ne\id$.
Let $1\ne k\in\centralizer{K}{x}$, and observe that $g\sim x\sim k\sim z$.
It follows immediately that $G\setminus\bigcup_{g\in G} H^{g}$ is a connected component of $\Gamma(G)$.
Hence, if $G$ is a $2$-Frobenius group, then $\Gamma(G)$ has $|K|+1$ connected components.

\section{Main Results}

We now present our main results.
If $G$ is a $2$-Frobenius group and the subgroup $K$ is not a group of prime power order, then a ``nice'' count on the number of connected components of $\Delta(G)$ is available; in fact, under this assumption, $\Delta(G)$ will have the same number of connected components as $\Gamma(G)$.  We now prove Theorem \ref{main1}.


\begin{proof} [Proof of Theorem \ref{main1}]
By Lemma \ref{Hconnected}, the set $\left(\bigcup_{g\in G} H^{g}\right)^{\#}$ is partitioned into $|K|$ connected components of $\Delta(G)$.
	
Let $d\in G\setminus\left(\bigcup_{g\in G} H^{g}\right)$.
Our task is to show that there exists a path from $d$ into $K^{\#}$.
Raise $d$ to an appropriate power to obtain an element $x$ of prime order $p$.
If $x\in K$, then $d\approx x$ is a path from $d$ into $K^{\#}$.
So, assume that $x\notin K$.
Let $Q\in\syl{q}{K}$, where $q\neq p$, and let $K_{0}$ be the normal $q$-complement of $K$.
Write $\overline{G}=G/K_{0}$, and note that $\overline{G}$ is a $2$-Frobenius group.
By Lemma \ref{thomp}, $\overline{x}$ centralizes an element $\overline{y}\in \left(\overline{K}\right)^{\#}$.
Hence $[x,y]\in K_{0}$.
The coset representative $y$ can be chosen to belong to $Q$.
Note that $[x,y]\in Q$ as $Q\nsub G$.
Hence $[x,y]\in Q\cap K_{0}=\id$.
Because $x$ and $y$ commute and have coprime orders, $x\approx y$.
Now, $d\approx x\approx y$.
As $|K|$ is divisible by at least two distinct primes, the set $G\setminus\left(\bigcup_{g\in H} H^{g}\right)$ constitutes a connected component of $\Delta(G)$.
\end{proof}

We next consider the case where $D$ is a $p$-group for some prime $p$.  Notice that $D$ is a $p$-group for some prime $p$ if and only if $K$ and $G/L$ are $p$-groups.  Thus, this next theorem is Theorem \ref{main2}.

\begin{thrm}\label{Dpgroup}
If $G$ is a $2$-Frobenius group and $D$ is a $p$-group for some prime $p$, then $\Delta(G)$ has $|K|+ m_{p}(G)$ connected components.
\end{thrm}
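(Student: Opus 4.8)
The plan is to peel off the $|K|$ components coming from conjugates of $H$ and then to show that all remaining vertices split into $m_{p}(G)$ components, one for each subgroup of order $p$. First note that $D$ being a $p$-group forces $D\in\syl{p}{G}$ (it is a Hall subgroup by Lemma \ref{DisHall}), so $p\nmid|H|$ and $G/L$ is a $p$-group; in particular each conjugate $H^{g}$ is a $p'$-group. Since $H\le L$ and $L\nsub G$, every $H^{g}$ lies in $L$, and distinct Frobenius complements of $L$ meet trivially, so the $|K|=|G:\normalizer{G}{H}|$ conjugates of $H$ satisfy $\left(\bigcup_{g\in G}H^{g}\right)^{\#}=L\setminus K$. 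By Lemma \ref{Hconnected} applied to each conjugate, these non-identity elements form $|K|$ connected components, none of which is joined to a vertex of $(G\setminus L)\cup K^{\#}$. Thus it suffices to prove that the subgraph of $\Delta(G)$ induced on $(G\setminus L)\cup K^{\#}$ has exactly $m_{p}(G)$ connected components.

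Next I would set up the correspondence between these components and the subgroups of order $p$. Because $G/L$ is a $p$-group, every nontrivial $p'$-element of $G$ lies in $L$, and in $L=KH$ such an element is conjugate into $H$; hence every nontrivial $p'$-element belongs to some $H^{g}\subseteq L\setminus K$. It follows that every vertex of $(G\setminus L)\cup K^{\#}$ has order divisible by $p$ and that every element of $G$ of order $p$ lies in this set. Since $v\approx v^{o(v)/p}$, each vertex is adjacent (inside the induced subgraph) to an element of order $p$, so every component contains at least one subgroup of order $p$.

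The crux is to show that $\centralizer{G}{a}$ is a $p$-group for every $a\in G$ of order $p$; Lemma \ref{pgroupcentralizer} then prevents two distinct subgroups of order $p$ from lying in a common component. The key sublemma is that $\centralizer{G}{y}\le H$ for every nontrivial $y\in H$. To see this, pass to $G/K$, which is Frobenius with abelian kernel $L/K$; the centralizer in $G/K$ of the nontrivial kernel element $Ky$ is all of $L/K$, forcing $\centralizer{G}{y}\le L$. As $\centralizer{G}{y}\cap K=\centralizer{K}{y}=\id$ by the fixed-point-free action of $H$ on $K$, Lemma \ref{frobcompcontain} gives $\centralizer{G}{y}\le H$. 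Conjugating, $\centralizer{G}{y}\le H^{g}$ whenever $y\in (H^{g})^{\#}$, a $p'$-group. Consequently no nontrivial $p'$-element can commute with an element of order $p$: such a $p'$-element lies in some $H^{g}$, and then the order-$p$ element would sit inside the $p'$-group $H^{g}$. Therefore $\centralizer{G}{a}$ contains no nontrivial $p'$-element and is a $p$-group.

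Finally I would assemble the count. By Lemma \ref{pgroupcentralizer}, if $a$ and $b$ have order $p$ and lie in the same component of the induced subgraph, then $\langle a\rangle=\langle b\rangle$; combined with the previous paragraph, the assignment sending each component to its unique subgroup of order $p$ is a bijection onto the set of order-$p$ subgroups of $G$. Hence the induced subgraph on $(G\setminus L)\cup K^{\#}$ has $m_{p}(G)$ components, and $\Delta(G)$ has $|K|+m_{p}(G)$ components in total. I expect the main obstacle to be the sublemma $\centralizer{G}{y}\le H$, together with the realization that the hypothesis ``$D$ is a $p$-group'' is used precisely to confine every nontrivial $p'$-element to a conjugate of $H$; without it (the situation of Theorem \ref{main3}) some $p'$-elements escape into $D\setminus K$ and may centralize order-$p$ subgroups, which is exactly what the correction term $m_{p}^{\ast}$ records.
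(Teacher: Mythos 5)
Your proof is correct, but the decisive step is handled by a genuinely different mechanism than in the paper. The setup coincides: both arguments peel off the $|K|$ components $(H^{g})^{\#}$ via Lemma \ref{Hconnected}, note $D\in\syl{p}{G}$ via Lemma \ref{DisHall}, and reduce to showing that the remaining vertices (your $(G\setminus L)\cup K^{\#}$ is the same set as the paper's $\left(\bigcup_{g\in G}D^{g}\right)^{\#}$) split into exactly one component per subgroup of order $p$. The paper then argues combinatorially: every remaining vertex lies in a conjugate of the $p$-group $D$, hence is a $p$-element, and along a shortest path joining two elements of order $p$ each edge spans a cyclic $p$-group with a unique subgroup of order $p$, so the path can be shortened --- whence distinct order-$p$ subgroups lie in distinct components. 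You instead prove the stronger group-theoretic fact that $\centralizer{G}{a}$ is a $p$-group for every element $a$ of order $p$, by confining the centralizer of any nontrivial $p'$-element $y$ to the conjugate of $H$ containing $y$ (centralizers of nontrivial kernel elements of $G/K$ lie in $L/K$, then Lemma \ref{frobcompcontain}); this lets you invoke Lemma \ref{pgroupcentralizer}, which the paper proves but deploys only for Theorem \ref{completecount}. In effect you verify that, under the present hypothesis, every subgroup of order $p$ satisfies the defining condition of $m_{p}^{\ast}$ and the $\mathcal{D}\cup\mathcal{E}$-type components of Theorem \ref{completecount} are vacuous, so Theorem \ref{Dpgroup} becomes the degenerate case $m_{p}^{\ast}=m_{p}(G)$ of Theorem \ref{completecount}'s method --- a unification the paper does not make explicit, and one that explains the correction term exactly as you observe at the end. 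The trade-off is that the paper's inline path argument is shorter and needs no centralizer analysis, whereas your route yields more structural information and a uniform treatment of Theorems B and C. One pedantic point: Lemma \ref{frobcompcontain} only gives $\centralizer{G}{y}\le H^{l}$ for some $l\in L$, and to get $\centralizer{G}{y}\le H$ itself you should add that $y\in H\cap H^{l}\neq\id$ forces $H^{l}=H$; since your application needs only containment in \emph{some} conjugate of $H$, this costs nothing.
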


\begin{proof}
The set $\left(\bigcup_{g\in G} H^{g}\right)^{\#}$ is partitioned into $|K|$ connected components of $\Delta(G)$ by Lemma \ref{Hconnected}.
Note that $$G\setminus\bigcup_{g\in G} H^{g}=\left(\bigcup_{g\in G} D^{g}\right)^{\#}.$$
The hypothesis and Lemma \ref{DisHall} yield that $D\in\syl{p}{G}$.
Hence, every subgroup of order $p$ in $G$ is contained in some conjugate of $D$.
Lemma \ref{swimmingintoD} ensures that no point in the set $\left(G\setminus\left(\bigcup_{g\in G} D^{g}\right)\right)^{\#}$ is adjacent to a point in any conjugate of $D$.

So, we are left to show that if $\langle x\rangle$ and $\langle y\rangle$ are distinct subgroups of $G$ of order $p$, then $x$ and $y$ belong to distinct connected components of $\Delta(G)$.
For a contradiction, suppose that $x$ and $y$ lie in the same connected component of $\Delta(G)$, say $\Xi$.
Since $\Xi\subseteq\bigcup_{g\in G} D^{g}$, every element of $\Xi$ is a $p$-element.
Now, write $d(x,y)=n$, and note that $n>1$.
Let $$x\approx x_{1}\approx x_{2}\approx\dots\approx x_{n-1}\approx y$$ be a path of length $n$.
The subgroup $\langle x,x_{1}\rangle$ has a unique subgroup of order $p$.
Since $x_{1}$ is a $p$-element, $o(x_{1}^{t})=p$ for some positive integer $t$.
Thus, $\langle x\rangle=\langle x_{1}^{t}\rangle$.
Similarly, the subgroup $\langle x_{1},x_{2}\rangle$ has a unique subgroup of order $p$.
As $x_{2}$ is a $p$-element, $o(x_{2}^{s})=p$ for some positive integer $s$.
Hence $\langle x_{1}^{t}\rangle=\langle x_{2}^{s}\rangle$.
But now $\langle x\rangle=\langle x_{2}^{s}\rangle$, and so $x=x_{2}^{u}$ for some positive integer $u$.
It follows that $x\approx x_{2}$, which implies $d(x,y)\le n-1$, a contradiction.
\end{proof}

Finally, we have the case that $K$ is a $p$-group and $D$ is not a $p$-group for some prime $p$.  Since this is equivalent to $K$ being a $p$-group and $G/L$ not being a $p$-group, this next theorem is Theorem \ref{main3}.

\begin{thrm}\label{completecount}
Let $G$ be a $2$-Frobenius group, and let $p$ be a prime.
Assume that $K$ is a $p$-group and that $D$ is not a $p$-group.
Then, the number of connected components of $\Delta(G)$ is $$|K|+|H|+m_{p}^{\ast},$$ where $m_{p}^{\ast}$ is the number of subgroups of order $p$ in $G$ that are not centralized by an element of prime order other than $p$.
\end{thrm}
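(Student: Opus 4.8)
The plan is to peel off the $|K|$ components coming from the conjugates of $H$ and then analyze the graph $\mathcal{R}$ induced by $\Delta(G)$ on the remaining vertex set $\left(\bigcup_{g\in G} D^{g}\right)^{\#}$, exactly as in the proofs of Theorems \ref{main1} and \ref{Dpgroup}. By Lemma \ref{Hconnected} the sets $(H^{g})^{\#}$ account for $|K|$ components, and since $|H|$ equals the number of conjugates of $D$ (the Frobenius complements of $\overline{G}=G/K$ are self-normalizing), the vertices of $\mathcal{R}$ are covered by the $|H|$ induced subgraphs $\Delta(D^{g})$, any two of which overlap precisely in $K^{\#}$, because $D^{g}\cap D^{h}=K$ for $g\neq h$. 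The first point I would record is that Lemma \ref{swimmingintoD} forbids edges between $D^{g}\setminus K$ and $D^{h}\setminus K$ when $g\neq h$: any vertex adjacent to an element of $D^{g}\setminus K$ already lies in $D^{g}$. Thus every path in $\mathcal{R}$ crossing from one conjugate of $D$ to another must pass through $K^{\#}$.

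The key technical step is a confinement statement for centralizers of kernel elements: for each $k\in K^{\#}$ I would show $\centralizer{G}{k}\le D^{g}$ for some conjugate. Since $L$ is Frobenius with kernel $K$, no element of $L\setminus K$ centralizes a nontrivial element of $K$, so $\centralizer{G}{k}\cap L=\centralizer{L}{k}=\centralizer{K}{k}\le K$; hence the image of $\centralizer{G}{k}$ in $\overline{G}$ meets the Frobenius kernel $L/K$ trivially, and Lemma \ref{frobcompcontain} places that image inside a conjugate of the complement $D/K$, giving $\centralizer{G}{k}\le D^{g}$. When $\centralizer{G}{k}$ is not a $p$-group, the relevant conjugate is uniquely pinned down by the image of any element of order prime to $p$. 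Combining this with Lemma \ref{swimmingintoD}, I can attach a single conjugate of $D$ to each vertex not of the exceptional type below: an element outside $L$ is sent to the unique conjugate containing it, and an element $k\in K^{\#}$ is sent to the conjugate containing $\centralizer{G}{a}$, where $\langle a\rangle$ is the unique subgroup of order $p$ contained in $\langle k\rangle$. A short case analysis (using that adjacent elements of $K^{\#}$ share the same $\langle a\rangle$, by Lemma \ref{cyclicgraphofapgroup}) shows this assignment is constant along edges, so it descends to a well-defined map from the non-exceptional components of $\mathcal{R}$ to the set of conjugates of $D$.

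Next I would isolate the exceptional components. If $\langle a\rangle$ has order $p$ and is not centralized by any element of prime order other than $p$, then $\centralizer{G}{a}$ is a $p$-group, and Lemma \ref{pgroupcentralizer} shows that the component of $a$ contains only $p$-elements lying in cyclic groups through $a$; distinct such subgroups yield distinct components, none of which meets a component containing an element of order prime to $p$. These contribute exactly $m_{p}^{\ast}$ components. Every remaining component contains an element of order prime to $p$; the assignment above maps these to conjugates of $D$, and the map is surjective because $D$, not being a $p$-group, has Hall $p'$-subgroups whose nonidentity elements lie in $D^{g}\setminus K$. Granting that each conjugate receives exactly one such component, the total becomes $|K|+|H|+m_{p}^{\ast}$.

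The main obstacle is exactly the injectivity of this map, that is, showing that the non-exceptional vertices assigned to a fixed conjugate $D^{g}$ form a \emph{single} connected component of $\Delta(D^{g})$. Writing $D=K\rtimes R$ with $R=\normalizer{D}{H}$ cyclic, the elements of order prime to $p$ inside one Hall $p'$-subgroup already form a clique, and Lemma \ref{thomp} supplies, for each prime $q\neq p$, an element of order $q$ centralizing a nontrivial element of $K$, giving the bridges from these cliques into $K^{\#}$. The delicate part is to link the various Hall $p'$-subgroups of $D$, which are permuted transitively by $K$, through common neighbours in $K^{\#}$; here I expect to use the nilpotence of $K$ together with coprime-action fixed-point estimates on $\centerof{K}$ to route every non-exceptional element of $K^{\#}$ and every element of $D\setminus K$ to one fixed element of order prime to $p$. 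This internal connectivity argument, carried out inside $D^{g}$ in the spirit of the proof of Theorem \ref{main1}, is where the real work lies.
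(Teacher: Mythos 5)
Your architecture matches the paper's: peel off the $|K|$ components from conjugates of $H$ (Lemma \ref{Hconnected}), confine centralizers to conjugates of $D$ via Lemma \ref{frobcompcontain}, isolate the $m_{p}^{\ast}$ exceptional components via Lemma \ref{pgroupcentralizer}, and separate distinct conjugates of $D$ so that each contributes its own component. Your confinement statement for arbitrary $k\in K^{\#}$ (rather than only elements of order $p$) is correct and in fact gives a slightly cleaner separation of distinct conjugates than the paper's argument, which instead shows $\centralizer{K}{d_{1}}\cap\centralizer{K}{d_{2}}\le\centralizer{K}{h}=\id$ for an element $h$ of a conjugate of $H$ inside $\langle d_{1},d_{2}\rangle$. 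However, the proposal has a genuine gap, and you flag it yourself: you never prove that the non-exceptional vertices attached to a single conjugate of $D$ form \emph{one} connected component, writing only that you ``expect to use the nilpotence of $K$ together with coprime-action fixed-point estimates'' and that this ``is where the real work lies.'' That step is the heart of the theorem, and the tool you name will not suffice: coprime action gives each $p$-regular element $x\in D$ a nontrivial fixed-point set $\centralizer{K}{x}$, but two elements $x,y$ lying in different Hall $p'$-subgroups of $D$ could a priori have $\centralizer{K}{x}\cap\centralizer{K}{y}=\id$, so you get no common neighbour in $K^{\#}$ and no path between them.

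The paper closes this gap with two ideas you are missing. First, by Lemma \ref{normalizerDHcyclic} the group $G/L\cong\normalizer{D}{H}$ is cyclic, so for a prime $q\neq p$ it has a \emph{unique} subgroup of order $q$; hence any two elements $x,y\in D$ of order $q$ satisfy $L\langle x\rangle=L\langle y\rangle$, and Dedekind's Lemma then gives $K\langle x\rangle=K\langle y\rangle$. Second, writing $x$ inside a $D$-conjugate of $\normalizer{D}{H}$, the subgroup $\centerof{K}H^{d}\langle x\rangle$ is itself a $2$-Frobenius group, so Lemma \ref{thomp} yields $\centralizer{\centerof{K}}{x}\neq\id$; this nontrivial subgroup lies in $\centerof{K\langle x\rangle}\cap K=\centerof{K\langle y\rangle}\cap K$, whose nonidentity elements are $p$-elements centralized by \emph{both} $x$ and $y$, hence adjacent to both by coprimality. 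This manufactures the common neighbour in $K^{\#}$ that your sketch lacks, and it is precisely where the hypothesis that $H$ (hence $\normalizer{D}{H}$, hence $G/L$) is cyclic enters. Elements of distinct prime orders $q,r\in\pi$ are then linked by conjugating into the single cyclic Hall $\pi$-subgroup $Q$ and using the already-established connection between $D$-conjugate elements. Without this (or an equivalent substitute), your count $|K|+|H|+m_{p}^{\ast}$ is only an upper-bound-shaped guess: the vertices assigned to one conjugate of $D$ might split into several components, and the proof is incomplete.
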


\begin{proof}
As usual, the set $\left( \bigcup_{g\in G} H^{g}\right)^{\#}$ is partitioned into $|K|$ connected components of $\Delta(G)$.  (See Lemma \ref{Hconnected}.)

We claim that every nonidentity element in $\bigcup_{g\in G} D^{g}$ is connected to an element of order $p$ in $G$.
Let $1\neq d\in\bigcup_{g\in G} D^{g}$.
For some positive integer $t$, the element $d^{t}$ has prime order.
If $o(d^{t})=p$, then $d\approx d^{t}$, and we are done.
Assume that $o(d^{t})\in\mathbb{P}\setminus\{p\}$.
Conjugation by $d^{t}$ induces an automorphism of $L$.
By Lemma \ref{thomp}, $\centralizer{K}{d^{t}}\neq\id$.
Let $k\in\centralizer{K}{d^{t}}$ be an element of order $p$.
Since $d^{t}$ and $k$ are commuting elements with coprimes orders, $d^{t}\approx k$.
Hence $d\approx d^{t}\approx k$.
The claim has been established.

Let $a\in G$ be an element of order $p$.
Notice that $\centralizer{G}{a}\cap L=\centralizer{L}{a}\le K$, and so $\centralizer{G}{a}K\cap L=K\centralizer{L}{a}=K$.
In particular, $\centralizer{G}{a}K/K$ intersects $L/K$ trivially.
By Lemma \ref{frobcompcontain}, $\centralizer{G}{a}K/K$ is contained in some conjugate of $D/K$, and, consequently, $\centralizer{G}{a}$ is contained in some conjugate of $D$.

Suppose that $\langle a\rangle$ is a subgroup of $G$ with $|\langle a\rangle|=p$ and that $\langle a\rangle$ is \textit{not} centralized by an element of prime order other than $p$.
Hence $\centralizer{G}{\langle a\rangle}=\centralizer{G}{a}$ is a $p$-group.
Lemma \ref{pgroupcentralizer} yields that the only elements of order $p$ adjacent to $a$ belong to the set $\langle a\rangle\setminus\id$.
We therefore count $m_{p}^{\ast}$ connected components of this type.

Next, let $\pi=\pi(\normalizer{D}{H})\setminus\{p\}$.
Because $\normalizer{D}{H}$ is cyclic, $\normalizer{D}{H}$ has a subgroup $Q$ with $|Q|=|\normalizer{D}{H}|_{\pi}$, the $\pi$-part of $|\normalizer{D}{H}|$.
Note that $|D|_{\pi}=|\normalizer{D}{H}|_{\pi}$, and so $Q$ is a Hall $\pi$-subgroup of $D$.
The solvability of $D$ ensures that $Q^{D}$ is the set of all Hall $\pi$-subgroups of $D$.
Hence, every $p$-regular element of $D$ belongs to a conjugate of $Q$.

Let $\mathcal{D}=\{x\in D\mid o(x)\in\pi\}$, and let $\mathcal{E}$ be the set of all elements of order $p$ that belong to $\centralizer{G}{d}$ for some $d\in\mathcal{D}$.
We shall show that all of the elements in $\mathcal{D}\cup\mathcal{E}$ lie in a single connected component of $\Delta(G)$.

Fix $x\in\mathcal{D}$ with $o(x)=q\in\pi$.
Since $x$ is a $p$-regular element of $D$, the element $x$ lies in some $D$-conjugate of $Q$.
Consequently, $x$ lies in some $D$-conjugate of $\normalizer{D}{H}$, and so $x$ normalizes $H^{d}$ for some $d\in D$.
The subgroup $K$ is nilpotent, and so $\centerof{K}\neq \id$.
As $\centerof{K}\characteristic K\nsub G$, the subgroup $\centerof{K}$ is normal in $G$.
Hence $\centerof{K}H^{d}\langle x\rangle$ forms a subgroup of $G$ and is, in fact, a $2$-Frobenius group.
Applying Lemma \ref{thomp}, $\centralizer{\centerof{K}}{x}\neq \id$.

Let $y\in\mathcal{D}$ with $o(y)=q$.
As $G/L\cong\normalizer{D}{H}$, the factor group $G/L$ is cyclic.
In particular, $G/L$ has a unique subgroup of order $q$.
It follows that $L\langle x\rangle=L\langle y\rangle$.
Now, using Dedekind's Lemma, observe that $$K\langle x\rangle=(D\cap L)\langle x\rangle=D\cap L\langle x\rangle=D\cap L\langle y\rangle=(D\cap L)\langle y\rangle=K\langle y\rangle.$$
Notice that $\id<\centralizer{\centerof{K}}{x}\le \centerof{K\langle x\rangle}\cap K$.
The elements $x$ and $y$ are therefore adjacent to every element in the set $(\centerof{K\langle x\rangle}\cap K)^{\#}$.
Of course, $x$ is adjacent to every element in $\centralizer{K}{x}^{\#}$ and $y$ is adjacent to every element in $\centralizer{K}{y}^{\#}$.
We point out also that this argument shows that if $d\in\mathcal{D}$, then $d$ connects to every element in $d^{D}$.

Now, let $v\in\mathcal{D}$ with $o(v)=r\in\pi\setminus\{q\}$.
Some $D$-conjugate of $v$ lies in $Q$, say $v^{d}$, where $d\in D$.
Now, $$v\approx\dots\approx v^{d}\approx x,$$ as $Q$ is cyclic.
It follows that the set $\mathcal{D}\cup\mathcal{E}$ is contained in a single connected component.

Let $D$ and $D^{g}$, ($g\in G$), be distinct conjugates.
Pick $d_{1}\in D\setminus K$ and $d_{2}\in D^{g}\setminus K$.
The factor group $G/K$ is a Frobenius group and the subgroup $K\langle d_{1},d_{2}\rangle$ cannot be contained in a conjugate of $D/K$.
Hence $K\langle d_{1}, d_{2}\rangle\cap L/K$ is nontrivial.
Working back in $G$, it follows that $K\langle d_{1},d_{2}\rangle \cap L> K$.
Now, the subgroup $\langle d_{1}, d_{2}\rangle$ must contain a nontrivial element from a conjugate of $H$, say $h$.
So $\centralizer{K}{d_{1}}\cap\centralizer{K}{d_{2}}\le\centralizer{K}{h}=\id$.
We conclude that any element of order $p$ in $K$ cannot be adjacent to elements in $D\setminus K$ and $D^{g}\setminus K$ for distinct conjugates $D$ and $D^{g}$.

In light of the previous paragraph, if $g\in G\setminus D$, the sets $\mathcal{D}\cup\mathcal{E}$ and $(\mathcal{D}\cup\mathcal{E})^{g}$ lie in distinct connected components of $\Delta(G)$.
There are $|D^{G}|=|H|$ connected components of this type.
All elements in $G$ have been accounted for, and thus there are $|K|+|H|+m_{p}^{\ast}$ connected components of $\Delta(G)$.
\end{proof}

\section{Examples} \label{last sec}

In this section, we present some examples.
The notation from the previous sections remains in effect.
We shall use $Z_{n}$ to denote the cyclic group of order $n$.  For most of these examples, the computations were done using the computer algebra system Magma.  (See \cite{magma} for information about Magma.)

\begin{enumerate}[leftmargin=*]
\smallskip

\item For this first example, we present a $2$-Frobenius group where the subgroup $D$ is a group of prime power order.  This gives an illustration of computing the formula in Theorem B.
Take $G = S_4$.
Notice that $D$ is a $2$-group, and so by Theorem B, the number of connected components in $\Delta(G)$ is $|K| + m_{2}(G)$.
Observe that $|K| = 4$ and $m_{2}(G)=9$.
Hence, the number of connected components in $\Delta(G)$ is $4 +9 = 13$.
\smallskip

\item Next, we present an example where $K$ is a $p$-group and $D/K$ has order $q \ne p$ and $m_p^*$ is not $0$.  This illustrates one of the possibilities for the formula in Theorem C.  
Consider the $2$-Frobenius group $G=((Z_5\times Z_5)\rtimes Z_3)\times Z_2$.
This group satisfies the hypotheses of Theorem C, and using Theorem C, we see that the number of connected components of $\Gamma(G)$ is $|K| + |H| + m_5^{*}$.
Note that $|K| = 25$ and $|H| = 3$. 
Using Magma, we find $m_5^* = 3$.
We compute that the number of connected components of $\Delta(G)$ is $25+3+3=31$.
In Magma, the group $G$ is \texttt{PrimitiveGroup(25,3)} from the Primitive groups database.  (See \cite{prim} for information regarding the Primitive groups database.)
\smallskip

\item We now consider an example where $K$ is not a group of prime power order.  This demonstrates the formula in Theorem A.  Let $G$ be the $2$-Frobenius group $(((Z_2 \times Z_2) \times (Z_5 \times Z_5))\rtimes Z_3)\rtimes Z_2$.
By Theorem A, the number of connected components of $\Delta(G)$ is $|K| + 1 = 100 + 1 = 101$.   In Magma, this group $G$ can be found as \texttt{SmallGroup (600,179)} from the Small groups database.  (A nice article about the small groups database is \cite{small}.)
\smallskip

\item We present another example where $K$ is a $p$-group and $D/K$ has order $q \ne p$ for a prime $q$, and in this case, however, $m_p^* = 0$.  This give a second example a group meeting the hypotheses of Theorem C.  The purpose of this example is to show that $m_p^* = 0$ can occur.
Take $G=((Z_2 \times Z_2 \times Z_2) \rtimes Z_7)\rtimes Z_3$.
Then $G$ satisfies the hypotheses of Theorem C.
Hence, the number of connected components of $\Gamma (G)$ is $|K| + |H| + m_2^*$.
Note that $|K| = 8$ and $|H| = 7$.
Using Magma, we compute that $m_2^* = 0$.
Thus, the number of connected components of $\Delta (G)$ is $8 + 7 + 0 = 15$.
In Magma, $G$ is \texttt{PrimitiveGroup(8,2)}.
\smallskip

\item We now present a situation where $K$ is a $p$-group, $D$ is not a $p$-group, but $p$ divides $|D:K|$.
Take $G = ((Z_2^{10}) \rtimes Z_{11}) \rtimes Z_{10}$, which is \texttt{PrimitiveGroup(1024,8)} in Magma.
This provides an example where $G$ satisfies the hypotheses of Theorem C and $|K|$ and $|G:L|$ are not coprime.
Hence, the number of connected components of $\Gamma (G)$ is $|K| + |H| + m_2^*$.
Observe that $|K| = 2^{10} = 1024$ and $|H| = 11$.
Using Magma, we compute that $m_2^* = 990$.
We deduce that the number connected components of $\Delta (G)$ is $1024 + 11 + 990 = 2025$.
\smallskip

\item We conclude with one more example of what can happen in Theorem C.  In this example, the subgroup $K$ is a $2$-group and $D/K$ is a $2'$-group so that $|D:K|$ is not a prime.
Take $G = ((Z_2^{15}) \rtimes Z_{151}) \rtimes Z_{15}$.
The group $G$ is too large to appear in Magma's databases, and so we will do the computations explicitly.  
Observe that we can think of $G$ as the additive group of a field of order $2^{15}$ being acted on by the subgroup of order $151$ in the multiplicative group and then all of this being acted on by the Galois group of the field which has order $15$.
It is easy to see that $|K| = 2^{15} = 32768$ and $|H| = 151$.  

We need to compute $m_2^*$.
Suppose that $x\in G$ has order $15$.
It is not difficult to see that $|\centralizer{K}{x}|=2$ and so $\normalizer{G}{\langle x \rangle}=\centralizer{G}{x}= Z_2 \times Z_{15}$.
Next, suppose that $x \in G$ has order $3$, then $|\centralizer{K}{x}| = 2^5 = 32$ and so $\normalizer{G}{\langle x \rangle} = \centralizer{G}{x} = (Z_2^5 \times Z_3) \rtimes Z_5$.  Similarly, if $x \in G$ has order $5$, then  $|\centralizer{K}{x}| = 2^3 = 8$ and $\normalizer{G}{\langle x \rangle} = \centralizer{G}{x} = (Z_2^3 \times Z_5) \rtimes Z_3$.  On the other hand, if $x \in G$ has order $2$, then $\centralizer{G}{x}$ is one of the following: $K$, $K \rtimes Z_3$, $K \rtimes Z_5$, or $K \rtimes Z_{15}$.    

We see that $G$ has $151 \cdot 2^{14}$ subgroups of order $15$.  Each subgroup of order $15$ centralizes one element of order $2$.  On the other hand, each element $x$ of order $2$ that is fixed by an element of order $15$ will fixed by all the of the subgroups of order $15$ in $\centralizer{G}{x}$, and we see that there are $2^{14}$ such subgroups.  Hence, there are $151$ elements of order $2$ in $G$ that are fixed by an element of order $15$.

Observe that $G$ has $151 \cdot 2^{10}$ subgroups of order $3$.
Each subgroup $C$ of order $3$ centralizes $31$ elements of order $2$.
On the other hand, each element $x$ of order $2$ that is fixed by $C$ will be fixed by all of the subgroups of order $3$ in $\centralizer{G}{x}$, and we see that there are $2^{10}$ such subgroups.
Notice that a Sylow $5$-subgroup of $\normalizer{G}{C}$ will centralize one of the elements of order $2$ in $K$, and the Sylow $2$-subgroup of $\centerof{\centralizer{G}{C}}$ will be generated by this element.
Hence, it will be the only element in $\centralizer{K}{C}$ that is fixed by an element of order $15$.
It follows that $C$ centralizes $30$ elements of order $2$ that are not centralized by an element of order $15$, and so $G$ has $151 \cdot 30$ elements of order $2$ that are centralized by an element of order $3$ and not an element of order $15$.

We now do a similar computation to compute the number of elements of order $2$ that are fixed by a subgroup of order $5$ and not a subgroup of order $15$.
Observe that $G$ has $151 \cdot 2^{12}$ subgroups of order $5$.
Each subgroup $C$ of order $5$ centralizes $7$ elements of order $2$.
On the other hand, each element $x$ of order $2$ that is fixed by $C$ will be fixed by all of the subgroups of order $5$ in $\centralizer{G}{x}$, and we see that there are $2^{12}$ such subgroups.
Notice that a Sylow $5$-subgroup of $\normalizer{G}{C}$ will centralize one of the elements of order $2$ in $K$, and the Sylow $2$-subgroup of $\centerof{\centralizer{G}{C}}$ will be generated by this element.
Hence, it will be the only element in $\centralizer{K}{C}$ that is fixed by an element of order $15$.
It follows that $C$ centralizes $6$ elements of order $2$ that are not centralized by an element of order $15$, and so, $G$ has $151 \cdot 6$ elements of order $2$ that are centralized by an element of order $5$ and not an element of order $15$.

The number of elements of order $2$ in $G$ that are fixed by an element of prime order other than $2$ is $151 + 151 \cdot 30 + 151 \cdot 6 = 151 \cdot 37$.
On the other hand, $G$ contains $2^{15} - 1 = 217 \cdot 151$ elements of order $2$.
This implies that $m_{2}^{*} = 217 \cdot 151 - 37 \cdot 151 = 180 * 151 = 27180$.
Hence $\Delta(G)$ has $32768 + 151 + 27180 = 59919$ connected components.

\end{enumerate}

\end{document}